\begin{document}

\newcommand{\be}{\begin{equation}}
\newcommand{\ee}{\end{equation}}
\newcommand{\bea}{\begin{eqnarray}}
\newcommand{\eea}{\end{eqnarray}}
\newcommand{\beaa}{\begin{eqnarray*}}
\newcommand{\eeaa}{\end{eqnarray*}}

\renewcommand{\proofname}{\bf Proof}
\newtheorem*{rem*}{Remark}
\newtheorem*{cor*}{Corollary}
\newtheorem{cor}{Corollary}
\newtheorem{prop}{Proposition}
\newtheorem*{prop1}{Proposition 1}
\newtheorem{lem}{Lemma}
\newtheorem*{lem1'}{Lemma $\mathbf{1^\prime}$}
\newtheorem{theo}{Theorem}
\newfont{\zapf}{pzcmi}

\def\R{\mathbb{R}}
\def\Z{\mathbb{Z}}
\def\N{\mathbb{N}}
\def\E{\mathbb{E}}
\def\P{\mathbb{P}}
\def\V{\mathbb{D}}
\def\I{\mathbbm{1}}
\newcommand{\D}{\hbox{\zapf D}}
\newcommand{\Pt}{\widetilde{\mathbb{P}}}
\newcommand{\Et}{\widetilde{\mathbb{E}}}

\title{Positivity of integrated random walks}
\author[V. Vysotsky]{Vladislav Vysotsky}
\address{Arizona State University, St.Petersburg Department of Steklov Mathematical Institute, and Chebyshev Laboratory at St.Petersburg State University}
\thanks{This research is supported by the Chebyshev Laboratory (Department of Mathematics and Mechanics, St.-Petersburg State University) under RF government grant 11.G34.31.0026 and by the grant 10-01-00242 of RFBR}
\email{vysotsky@asu.edu, vysotsky@pdmi.ras.ru}
\subjclass[2000]{60G50, 60F99}
\keywords{Integrated random walk, persistence, one-sided exit probability, unilateral small deviations, area of random walk, Sparre-Andersen theorem, stable excursion, area of excursion}

\begin{abstract}
Take a centered random walk $S_n$ and consider the sequence of its partial sums $A_n:=\sum_{i=1}^n S_i$. Suppose $S_1$ is in the domain of normal attraction of an $\alpha$-stable law with $1 < \alpha \le 2$. Assuming that $S_1$ is either right-exponential (that is $\P(S_1>x|S_1>0)=e^{-ax}$ for some $a>0$ and all $x>0$) or right-continuous (skip free), we prove that $$\P \Bigl \{ A_1 > 0, \dots, A_N > 0 \Bigr \} \sim C_\alpha N^{\frac{1}{2\alpha} - \frac12}$$ as $N \to \infty$, where $C_\alpha >0$ depends on the distribution of the walk. We also consider a conditional version of this problem and study positivity of integrated discrete bridges.
\end{abstract}

\maketitle

\section{Introduction}
\subsection{The problem}
Consider a non-degenerate sequence of centered random variables. What is the probability that it stays positive for a long time? Surprisingly little is known about this problem. Only one situation is well understood besides the trivial case that the variables are independent: For a random walk $S_n$, the classical Sparre-Andersen theorem expresses the generating function of $$q_N:=\P \Bigl \{ \min \limits_{1 \le k \le N} S_k  > 0 \Bigr \}$$ in terms of the probabilities $\P(S_n >0)$. A Tauberian theorem then implies that $N^{1/2} q_N \to c >0$ in the typical case that $\E S_1 =0, Var(S_1) < \infty$; moreover, if $\P(S_n >0) \to \gamma \in (0,1)$, then $N^{1-\gamma} q_N$ is slowly varying at infinity.

Consider the sequence $A_n:=\sum_{i=1}^n S_i$, which we call an {\it integrated random walk}. We are interested in the asymptotics of $$p_N:=\P \Bigl \{ \min \limits_{1 \le k \le N} A_k  > 0 \Bigr \}$$ as $N \to \infty$. One may also refer to similar types of questions as to asymptotics of the tail of one-sided exit times, unilateral small deviation probabilities, or {\it persistence} if adopting the terminology from physics.

This problem was introduced in the seminal paper by Sinai~\cite{Sinai} who considered the specific case that $S_n$ is a simple random walk. Sinai studied the question in connection with the behavior of solutions of the Burgers equation with random initial data. The author's initial motivation comes from his study~\cite{ISticky} of sticky particle systems with gravitational attraction. The asymptotical behavior of $p_N$ is directly related to the characteristics of such systems with random initial data at the critical moment of total gravitational collapse. The probabilities $p_N$ also arise in the wetting model of random polymers with Laplacian interaction considered by Caravenna and Deuschel~\cite{Polymers}. More generally, the probability that a certain random function does not change sign over a large time scale is relevant to the analysis of many physical models, Majumdar~\cite{Majum}.

Although continuous-time versions of our question have received more attention, there are few results even in this direction. Aurzada and Dereich~\cite{Germans} give a comprehensive overview of this work.  A recent breakthrough \cite{Germans} shows universality of the asymptotics in the {\it one-sided exit problem} for general integrated L\'{e}vy processes.

\subsection{The background}
The first result on the subject is due to Sinai~\cite{Sinai} who explained that $p_N \asymp N^{-1/4}$ for an (integrated) simple random walk. Because the continuous-time analog with an integrated Wiener process $A(t):=\int_0^t W(s) ds$ exhibits the same asymptotics and moreover, $$\P \{ \inf \limits_{0 \le t \le N} A(t) \ge -1 \} \sim c N^{-1/4}$$ (see, for example, Isozaki and Watanabe~\cite{Japan}), it was conjectured in \cite{Polymers, ISticky} that $p_N \asymp N^{-1/4}$ for any walk $S_n$ with $\E S_1 =0$ and $Var(S_1) < \infty$. This conjecture has not yet been fully proved and below we briefly explain existing approaches.

Sinai's method relies on the observation that if $S_n$ is a simple random walk, then all the local extrema of $A_n$ occur at the times when $S_n$ returns to zero, and such times form a renewal sequence. This property is based on the very specific structure of the increments of the walk and does not hold for different distributions. However, the main message here is to partition the trajectory of $S_n$ with a suitable sequence of regeneration times. Vysotsky~\cite{IIRW} explored this idea and showed that $p_N \lesssim N^{-1/4}$ for {\it any integer-valued} walk (we write $a_n \lesssim b_n$ for two non-negative sequences $a_n$ and $b_n$ if $a_n/b_n$ stays bounded while $a_n \asymp b_n$ means $a_n \lesssim b_n$ and $b_n \lesssim a_n$). Unfortunately, further development of this method required restrictive assumptions on the positive increments of the walk. Due to technical difficulties,~\cite{IIRW} also imposed similar constraints on the negative increments and showed that $p_N \asymp N^{-1/4}$ for double-sided exponentials, symmetric geometric, lazy simple, and two other ``mixed'' types of random walks.  We stress that the present paper removes these superfluous assumptions on the negative increments.

The second approach is due to Aurzada and Dereich~\cite{Germans} who used strong approximation by a Wiener process assuming $\E e^{a |S_1|} < \infty$ for some $a>0$. This powerful method allowed them to prove universality of the asymptotics for general integrated Levy processes but, because the strong approximation technique does not work well for small values of time,~\cite{Germans} obtains extra factors in the estimates: $N^{-1/4} (\log N)^{-4} \lesssim p_N \lesssim N^{-1/4} (\log N)^4$.

The third method by Dembo and Gao~\cite{Dembo} is based on decomposition of the sequence $A_n$ at its maximum. \cite{Dembo} proved that $p_{N-1} \le c_1 (\E|S_N|/N)^{1/2}$ for {\it any} (centered) walk; this bound clearly scales as the desired $N^{-1/4}$ for walks with $Var(S_1) < \infty$. For the sharp lower bound, Dembo and Gao still had to impose assumptions on the positive increments of the walk. They showed that $c_2 (\E|S_N|/N)^{1/2} \le p_{N-1}$ assuming, essentially, that the tail $\P(S_1 > x)$ decays exponentially or super-exponentially. Thus the results of \cite{Dembo} cover those of \cite{IIRW} and essentially of \cite{Germans}.

\subsection{Results and organization of the paper}
This paper proves the sharp asymptotics for $p_N$ in certain cases. We follow the approach developed in \cite{IIRW} but work under much less restrictive conditions on the increments of $S_n$. Presently, it seems impossible to get the sharp asymptotics using the other methods described above.

Let us state the assumptions. A random walk $S_n$ is {\it right-exponential} if $\mbox{Law} (S_1 | S_1>0)$ is an exponential distribution. An integer-valued walk $S_n$ is {\it right-continuous} (skip free) if $\P\{ S_1=1|S_1>0\} = 1$; the name comes from the analogy with spectrally negative integrable L\'{e}vy processes, which do not have positive jumps and hit all intermediate values before reaching any positive horizontal level. The distributions above are well known in the renewal theory and have the characteristic property that all overshoots of $S_n$ over any fixed level are identically distributed.

Suppose that $S_1$ belongs to the domain of normal attraction (to be denoted as $S_1 \in \mathcal{DN}(\alpha)$) of a strictly stable law with the index $1 < \alpha \le 2$. If $\alpha < 2$ and $S_1$ is right-exponential or right-continuous, then such a law is spectrally negative. By the stable central limit theorem and Eq. (2.2.30) in Zolotarev~\cite{Zol} for the positivity parameter, it holds that $\P\{S_n > 0\} \to 1/\alpha$. For $1 < \alpha \le 2$, define
\beaa
\mathcal{R}_\alpha &:=& \Bigl \{ S_1: S_n \mbox{ is either right-exponential or right-continuous}, \E S_1 = 0, \\
&& S_1 \in \mathcal{DN}(\alpha) \mbox{, and } \sum_{n=1}^\infty \frac1n \Bigl(\P\{S_n > 0\}  - \frac1\alpha \Bigr) \mbox { converges} \Bigr \}.\\
\eeaa
Recall that (Theorem 2.6.6 in Ibragimov and Linnik~\cite{IbrLin}) $S_1 \in \mathcal{DN}(2)$ is equivalent to $Var(S_1) < \infty$, which ensures the convergence of the series (Feller~\cite[Ch. XVIII.5]{Feller}). Due to Egorov~\cite{Egorov}, for $1< \alpha <2$ a sufficient condition for the convergence is $\int_0^{\infty} x^\alpha |d(F(-x)-G_\alpha(-x)| < \infty$, where $F(x)$ and $G_\alpha(x)$ are the distribution functions of $S_1$ and the limit stable law, respectively.

We now state the main result of the paper.

\begin{theo} \label{SHARP}
Let $S_n$ be a random walk such that $S_1 \in \mathcal{R}_\alpha$ for some $1 < \alpha \le 2$. Then there exists a constant $C_\alpha=C_\alpha(Law(S_1))>0$ such that $$\lim_{N \to \infty} N^{\frac12 - \frac{1}{2\alpha}} p_N = C_\alpha.$$
\end{theo}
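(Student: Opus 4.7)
The plan is to extend the regeneration method of \cite{IIRW} to the stable regime, using the right-exponential/right-continuous hypothesis to obtain a clean ladder structure for the two-dimensional Markov chain $(S_n,A_n)$ and the stable CLT to identify the exponent.

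First, I would exploit that for both classes of walks all upward overshoots of a fixed level are i.i.d.\ (exponential in one case, identically zero in the other). This yields, after an appropriate time reversal, a sequence of regeneration times $\nu_1<\nu_2<\cdots$ at which the triples $(\nu_k-\nu_{k-1},\,S_{\nu_k}-S_{\nu_{k-1}},\,A_{\nu_k}-A_{\nu_{k-1}})$ are i.i.d. The persistence event $\{A_1>0,\dots,A_N>0\}$ then decomposes according to the number of completed cycles before $N$, giving a renewal-type representation
\[
p_N=\sum_{k\ge 0}\P\big(\nu_k\le N<\nu_{k+1},\ A_{\nu_j}>0 \text{ for all } j\le k,\ \min_{\nu_k<n\le N}A_n>0\big).
\]
The special increment distribution is what makes this decomposition usable: without it, the conditional law of the trajectory inside a cycle would depend on too many hidden variables for the renewal equation to close.

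Second, I would invoke $S_1\in\mathcal{DN}(\alpha)$ together with the joint invariance principle for integrated walks: $(n^{-1/\alpha}S_n,\,n^{-1-1/\alpha}A_n)$ converges to $(\zeta_1,\int_0^1\zeta_s\,ds)$ for the limiting strictly $\alpha$-stable process $\zeta$. Combined with the Spitzer-series condition built into $\mathcal{R}_\alpha$, which via Spitzer's formula is precisely what makes $\P(S_n>0)$ approach $1/\alpha$ at a rate fast enough for a sharp renewal analysis, this controls the two-sided tails of the cycle step $(\nu_1,A_{\nu_1})$ and lets me extract the asymptotic of $p_N$ from its generating function via Karamata's Tauberian theorem. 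The exponent $1/(2\alpha)-1/2$ then emerges from the interplay between the $(1+1/\alpha)$-Hurst scaling of the integrated stable process and the classical one-half exponent of Sparre-Andersen persistence for the embedded ladder walk.

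The main obstacle is pinning down the sharp constant $C_\alpha$ rather than only the order $p_N\asymp N^{1/(2\alpha)-1/2}$. For $1<\alpha<2$ the heavy-tailed jumps make the cycle-area increments have infinite-variance distributions, so a naive Tauberian argument would leave slowly varying factors. My strategy is to first prove a conditional invariance principle for $A_n$ given $\{A_1>0,\dots,A_n>0\}$, showing that the rescaled conditioned process converges to an integrated stable excursion (this is where the \emph{integrated discrete bridges} mentioned in the abstract enter the argument), and then to deduce the sharp asymptotic for $p_N$ by integrating this conditional limit against the unconditional stable density of $A_N$. The Spitzer-series condition in $\mathcal{R}_\alpha$ is precisely what guarantees a nondegenerate limit in the conditional invariance principle and thereby the existence, rather than mere boundedness away from $0$ and $\infty$, of the constant $C_\alpha$.
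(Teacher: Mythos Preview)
Your regeneration decomposition is the right starting point and coincides with the paper's, but two essential ingredients are missing, and without them the argument cannot reach a sharp constant.

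First, you never identify the structural fact that makes Sparre--Andersen applicable to the embedded walk: the cycle areas $\psi_k := A_{\nu_k} - A_{\nu_{k-1}}$ are \emph{symmetric} conditionally on the cycle length, i.e.\ $(\theta_1,\psi_1)\stackrel{\D}{=}(\theta_1,-\psi_1)$. This is not automatic from the i.i.d.\ overshoot property; it comes from a time-reversal duality that holds specifically under the right-exponential or right-continuous hypothesis (Lemma~\ref{IID}). Without this symmetry there is no reason for $\P(\min_{i\le k}\Psi_i>0)\sim c_1 k^{-1/2}$, and the ``one-half exponent of Sparre--Andersen'' you invoke for the embedded walk is unjustified. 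Relatedly, your display $p_N=\sum_k\P(\nu_k\le N<\nu_{k+1},\,A_{\nu_j}>0\ \forall j\le k,\,\min_{\nu_k<n\le N}A_n>0)$ silently assumes that positivity of $A_n$ on $[1,\nu_k]$ reduces to positivity at the regeneration times; this reduction (equation~\eqref{reduction}) is true only because the $\nu_j$ are chosen so that the local minima of $A$ occur there, which you do not argue.

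Second, your plan for the sharp constant does not work as written. You concede that a Tauberian argument on the generating function leaves slowly varying corrections; your proposed fix---prove a conditional invariance principle for $A_n$ given $\{A_1>0,\dots,A_n>0\}$ and then ``integrate this conditional limit against the unconditional stable density of $A_N$''---is circular: $p_N$ is the \emph{normalising constant} of that conditional law, not something recoverable by integrating it against a marginal. The paper's route is different and more concrete. It conditions on $(\eta(N),\Theta_{\eta(N)},\Psi_{\eta(N)})$, proves a \emph{bivariate} conditional limit theorem for $(\Theta_k,\Psi_k)$ given $\min_{i\le k}\Psi_i>0$ (a two-dimensional Bolthausen-type statement, \eqref{bolt2D}, enabled precisely by the symmetry above via Proposition~\ref{INDEP}), and separately establishes uniform tail asymptotics for the length and area of the last \emph{incomplete} cycle (Lemmas~\ref{CYCLE} and~\ref{LAST 1}). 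The constant $C_\alpha$ then emerges as the explicit triple integral~\eqref{const}. Finally, the integrated-bridge result mentioned in the abstract is a separate application of the same machinery (Proposition~\ref{BRIDGE}) and plays no role in the proof of Theorem~\ref{SHARP}; it should not appear in your argument for the sharp constant.
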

\begin{rem*}
The computable bounds for $C_2$ when $S_1$ is upper-exponential are given below in \eqref{c}.
\end{rem*}
Our proof should also work if we drop convergence of the series in the definition of $\mathcal{R}_\alpha$. Then $N^{\frac12 - \frac{1}{2\alpha}} p_N$ becomes  slowly varying at infinity instead of being convergent. We also point out that \cite{Dembo} proved the weak asymptotics $p_N \asymp N^{\frac{1}{2\alpha} - \frac12}$ for centered random walks with $S_1 \in \mathcal{DN}(\alpha)$ whose tail $\P(S_1 > x)$ decays exponentially or super-exponentially. This class of distributions is much wider than $\mathcal{R}_\alpha$.

We can also apply our method to a conditional version of the problem with integrated discrete bridges instead of integrated random walks. For an integer-valued walk $S_n$, put $$p^*_N:=\P \Bigl \{ \min \limits_{1 \le k \le N} A_k  > 0 \bigl | S_N = 0 \bigr. \Bigr \}$$ for $N \in \mathcal{D}_{S_1} := \{n: \P(S_n = 0) >0 \}$ where this expression is well-defined.

\begin{prop} \label{BRIDGE}
Let $S_n$ be an integer-valued random walk with $\E S_1=0$ and $Var(S_1) < \infty$. Then $p^*_N \lesssim N^{-1/4}$ and moreover, $p^*_N \asymp N^{-1/4}$ if $S_1$ is right-continuous, as $N \to \infty$ along $\mathcal{D}_{S_1}$.
\end{prop}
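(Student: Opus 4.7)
\emph{Upper bound.} For the first inequality $p^*_N \lesssim N^{-1/4}$, I would apply the Markov property at time $M := \lfloor N/2 \rfloor$:
\beaa
\P\{A_1, \dots, A_N > 0,\, S_N = 0\} &\le& \E\bigl[ \I\{A_1, \dots, A_M > 0\}\, \P(S_N = 0 \mid S_M)\bigr] \\
&\le& \P\{A_1, \dots, A_M > 0\}\, \sup_{s \in \Z} \P(S_{N - M} = s).
\eeaa
The first factor is $\lesssim N^{-1/4}$ by the Dembo--Gao bound~\cite{Dembo}, which applies to any centered walk with $Var(S_1) < \infty$, and the second is $\lesssim N^{-1/2}$ by Gnedenko's local limit theorem. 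Dividing the product by $\P(S_N = 0) \asymp N^{-1/2}$ (local CLT again, along $\mathcal{D}_{S_1}$) yields the upper bound.

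\emph{Lower bound.} Under right-continuity, I would exhibit a subset of $\{A_1, \dots, A_N > 0,\, S_N = 0\}$ of probability $\gtrsim N^{-3/4}$ by a three-block construction on $[0, N/3]$, $(N/3, 2N/3]$, and $(2N/3, N]$. On the first block I require $A_k > 0$ for $k \le N/3$, together with $S_{N/3}/\sqrt{N}$ in a fixed compact interval $[c_1, c_2] \subset (0, \infty)$ and $A_{N/3} \ge C N^{3/2}$ for $C$ large. Since the Dembo--Gao lower bound gives $\P\{A_1, \dots, A_{N/3} > 0\} \gtrsim N^{-1/4}$ for right-continuous walks (the positive tail being trivially bounded, their hypothesis holds), and the conditional joint distribution of $(S_{N/3}/\sqrt{N}, A_{N/3}/N^{3/2})$ given positivity is nondegenerate at the typical scale, this event has probability $\gtrsim N^{-1/4}$. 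On the middle block I force $S_k/\sqrt{N}$ to stay in a slightly larger compact subinterval of $(0, \infty)$; this is typical with conditional probability $\gtrsim 1$ by the CLT, and on it $A_k$ remains of order $N^{3/2}$. On the last block I ask $S_N = 0$, which by the local CLT has conditional probability $\asymp N^{-1/2}$; since $A_{2N/3}$ is already of order $N^{3/2}$ and the typical bridge-like descent from $\sqrt{N}$ to $0$ contributes a further positive quantity of the same order, integrated positivity is preserved with conditional probability $\gtrsim 1$. Multiplying the three factors yields $\gtrsim N^{-1/4} \cdot 1 \cdot N^{-1/2} = N^{-3/4}$, and dividing by $\P(S_N = 0) \asymp N^{-1/2}$ gives $p^*_N \gtrsim N^{-1/4}$.

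\emph{Main obstacle.} The delicate step is the first block: I need the conditional law of $(S_{N/3}/\sqrt{N}, A_{N/3}/N^{3/2})$ given integrated positivity to put a positive fraction of its mass on an open subset of $(0, \infty) \times (0, \infty)$. I would obtain this by inspecting the constructive lower bound of Dembo--Gao~\cite{Dembo} (or the refinement used in the proof of Theorem \ref{SHARP}), which produces explicit positive paths of prescribed joint scaling; the right-continuity enters precisely here, through the cycle lemma and the explicit first-passage distributions available for skip-free walks. Once the first block is under control, the middle and last blocks reduce to routine CLT and local CLT estimates.
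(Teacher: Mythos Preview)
Your upper bound is correct and, in fact, cleaner than the paper's argument. The paper does not split at $N/2$; instead it works entirely through the regeneration structure at returns to zero. Writing $\Theta^*_k$ for the successive moments of leaving zero and $\Psi^*_k$ for the accumulated areas, the paper observes that $\{S_N=0\}$ forces $N$ to be one of the $\Theta^*_k$, so that
\[
\P\{A_1,\dots,A_N>0,\,S_N=0\}\le \P\{S_1\neq 0\}\sum_{n\ge 1}\P^*\{\min_{k\le n}\Psi^*_k>0,\,\Theta^*_n=N\},
\]
and then invokes Proposition~\ref{INDEP} to decouple the minimum from the hitting event, followed by Doney's local large deviation estimate and a local limit theorem to sum in $n$. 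Your Markov-property split combined with the Dembo--Gao upper bound and Gnedenko's local theorem bypasses all of this machinery; what it costs you is the exact same regeneration framework that the paper then reuses, almost for free, in the lower bound.

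Your lower bound, by contrast, has a genuine gap at precisely the point you flag. You need
\[
\P\bigl\{A_1,\dots,A_M>0,\ S_M/\sqrt{N}\in[c_1,c_2],\ A_M\ge CN^{3/2}\bigr\}\gtrsim N^{-1/4},
\]
i.e.\ a nondegenerate conditional limit for $(S_M/\sqrt{N},A_M/N^{3/2})$ given \emph{integrated} positivity. Neither Dembo--Gao nor the conditional limit theorems in this paper supply that statement: Dembo--Gao's lower bound is obtained by a maximum-decomposition argument that gives no control on the terminal pair, and the conditional functional limit theorems here (e.g.\ \eqref{bolt2D}) concern the cycle walk $(\Theta_k,\Psi_k)$ conditioned on $\Psi$ staying positive, not $(S_n,A_n)$ conditioned on $A$ staying positive. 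If you try to cheapen the first block by asking instead for $S_1,\dots,S_M>0$ (where Bolthausen's meander theorem does give the joint limit you want), the cost becomes $N^{-1/2}$ rather than $N^{-1/4}$ and the product is too small. The paper avoids this difficulty entirely: for the lower bound it keeps the same regeneration decomposition, uses the \emph{reverse} inequality \eqref{indep1} of Proposition~\ref{INDEP} to get $\bar r_N\gtrsim N^{-3/4}$ for the version with $\min\Psi^*_k\ge 0$, and then passes from $\ge 0$ to $>0$ by conditioning on a single step (right-continuity makes this a fixed multiplicative loss). That last trick is where skip-freeness is actually used, and it is much lighter than the joint-distribution input your construction would require.
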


Although this statement covers quite a narrow class of distributions, it is the first result of such type. An open and very challenging problem that has recently received some attention is to find the asymptotics of
$$\P \Bigl \{ \min \limits_{1 \le k \le N} A_k  > 0 \bigl | A_N = 0 \bigr. \Bigr \} \quad \mbox{and} \quad \P \Bigl \{ \min \limits_{1 \le k \le N} A_k  > 0 \bigl | S_N =0, A_N = 0 \bigr. \Bigr \}.$$ These probabilities are related to polymer models similar to the one of Caravenna and Deuschel~\cite{Polymers}.

This paper is organized as follows. Sec.~\ref{SEC NON SHARP} explains our approach of partitioning the trajectory of $S_n$ into independent parts (so-called cycles) by the appropriate moments of regeneration. The pivotal result of the section is Proposition~\ref{INDEP} on bivariate random walks that stay in the right half-plane. Roughly speaking, it is a bivariate version of the famous Sparre-Andersen theorem that $q_N$ does not depend on the distribution of the walk if $S_1$ is symmetric and continuous. In addition to its independent interest, Proposition~\ref{INDEP} leads to a simple and very intuitive proof that $p_N \asymp N^{\frac{1}{2\alpha} - \frac12}$ for $S_1 \in \mathcal{R}_\alpha$; one may regard this proof as a rigorous version of the heuristic arguments in \cite[Sec. 2.1]{IIRW}. We give the proof here to make the paper more readable and to show the advantage of our technique. In Sec.~\ref{SSEC BRIDGE} we apply Proposition~\ref{INDEP} to get our results on the positivity of integrated bridges. Theorem~\ref{SHARP} is proved in Sec.~\ref{SEC SHARP}. The necessary ingredients are prepared in Sec.~\ref{SEC EXCURSIONS}, where we study joint tails of areas and lengths of stable excursions, cycles and meanders. There we also discuss conditional limit theorems for bivariate random walks, one of the main tools in Sec.~\ref{SEC SHARP}.

\section{Partitioning into cycles and non-sharp asymptotics of $p_N$} \label{SEC NON SHARP}

\subsection{Partitioning by regenerating times} \label{PARTITION}
The main idea of our approach is to partition the trajectory of the random walk $S_n$ into appropriate independent parts. Define the moments of {\it crossing} the zero level {\it from below} as $$\Theta_0:=\min\{n \ge 0: S_{n+1} > 0\}, \quad \Theta_{k+1}:= \min\{n > \Theta_k: S_n \le 0, S_{n+1} > 0\}$$ for $k \ge 0$. A non-degenerate centered random walk is recurrent hence the r.v.'s defined above are proper.  We stress that although the variables $\Theta_k + 1$ are stopping times, the variables $\Theta_k$ are not. The trajectory of $S_n$ is thus partitioned into parts that we call {\it cycles} (except for the part until $\Theta_0$ that will be excluded from the consideration), and each cycle starts with a positive excursion  followed by a negative excursion. For $k \ge 1$, let $\theta_k:=\Theta_k-\Theta_{k-1}$ be the length of the $k$'th cycle and let $\psi_k:=A_{\Theta_k}-A_{\Theta_{k-1}}$ be its area; also, set $\Psi_k:=A_{\Theta_k}$ for the total area of the first $k$ cycles so that $\psi_k=\Psi_k -\Psi_{k-1}$.

Define $\Pt(\cdot):=\P(\cdot | S_1>0)$ as it is more convenient to assume that $S_n$ starts with a positive excursion and so $\Theta_0=0$ $\Pt$-a.s.; also put $\sigma^2:=Var(S_1)$. The following  observation from Vysotsky~\cite{IIRW} (see Lemmae 1, 2 and Proposition 1) plays the crucial role for our method.

\begin{lem} \label{IID}
Let $S_n$ be a centered random walk that is either right-exponential or right-continuous. Then $(\theta_n, \psi_n)_{n \ge 1}$ are i.i.d. and $(\theta_1, \psi_1) \stackrel{\D}{=} (\theta_1, -\psi_1)$. If $S_1 \in \mathcal{R}_2$, then $\theta_1 \in \mathcal{DN}(1/2)$, and, moreover, $\lim \limits_{n \to \infty} n^{1/2} \P \{ \theta_1 \ge n \} = \sqrt{\frac{8}{\pi}} \frac{\sigma}{\E |S_1|}$ if $S_1$ is right-exponential.
\end{lem}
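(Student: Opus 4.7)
The plan is to address in turn the three assertions of the lemma: the i.i.d.\ structure of the cycle variables, the sign-symmetry of $(\theta_1, \psi_1)$, and the tail asymptotics of $\theta_1$.

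For the \textbf{i.i.d.\ claim}, I would exploit a regeneration at each $\Theta_k+1$. In the right-continuous case $S_{\Theta_k}=0$ and $S_{\Theta_k+1}=1$ deterministically, because the only positive increment is $+1$. In the right-exponential case, the memoryless property of Exp$(a)$ applied to the $(\Theta_k+1)$-th increment forces the overshoot $S_{\Theta_k+1}$ to be Exp$(a)$-distributed independently of the $\sigma$-algebra generated by the walk up to time $\Theta_k$ (and in particular of $S_{\Theta_k}$). Applying the strong Markov property at the stopping time $\Theta_k+1$, the post-$\Theta_k$ trajectory becomes an independent copy of a walk started from this common law, so $(\theta_{k+1},\psi_{k+1})$ is independent of the past and distributed as $(\theta_1,\psi_1)$ under $\Pt$.

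For the \textbf{sign-symmetry} $(\theta_1,\psi_1)\stackrel{\D}{=}(\theta_1,-\psi_1)$, which I expect to be the principal obstacle, I would split the cycle at the first entry $\tau$ into $(-\infty,0]$ and look for a measure-preserving involution on path space that swaps and negates the ``positive arc'' indexed by $[1,\tau-1]$ with the ``non-positive arc'' indexed by $[\tau,\theta_1]$, thereby fixing $\theta_1$ but negating $\psi_1$. The right-exponential and right-continuous hypotheses should enter here precisely because they make the overshoot laws at the entry into and exit from the non-positive half-line governed by the same memoryless mechanism, which is what should make the swap map distribution-preserving despite the absence of sign-symmetry in the step law. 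Verifying this invariance---i.e., matching the joint density of the increment sequence of a cycle and of its swap---is the technical heart of the argument and where I foresee the most work.

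For the \textbf{tail of $\theta_1$}, decompose $\theta_1=(\tau-1)+(\theta_1-\tau+1)$ as the sum of the lengths of the two arcs; each arc is a first-passage time for a centered finite-variance walk, so the classical Sparre-Andersen/Rogozin asymptotic gives each a $c/\sqrt{n}$ tail and hence $\Pt(\theta_1\geq n)\asymp n^{-1/2}$, proving $\theta_1\in\mathcal{DN}(1/2)$. To extract the explicit constant in the right-exponential case, observe that the strict ascending ladder height $H_+$ coincides with the overshoot over zero, which is Exp$(a)$ by memorylessness, so $\E H_+=1/a$. Combining the centering identity $\E|S_1|=2\P(S_1>0)\E(S_1|S_1>0)=2\P(S_1>0)/a$ with Rogozin's asymptotic $\P(\tau_+\geq n)\sim \sigma/(\E H_+\sqrt{2\pi n})$ (and the analogous ladder computation for the non-positive arc, whose relevant ladder-height mean is also $1/a$ after reflection by the memoryless overshoot), arithmetic should yield the prefactor $\sqrt{8/\pi}\,\sigma/\E|S_1|$.
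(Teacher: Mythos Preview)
Your i.i.d.\ argument matches the paper's exactly: both use that the overshoot $S_{\Theta_k+1}$ is independent of the past with a fixed law, giving a regeneration at $\Theta_k+1$.

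The gap is in the symmetry step. A ``swap and negate'' of the positive arc with the non-positive arc---whether at the level of path values or of increments---does \emph{not} preserve the law. At the increment level, swapping the two blocks is a cyclic rearrangement, but the subsequent negation requires $S_1\stackrel{\D}{=}-S_1$, which is not assumed; the memoryless overshoot structure you invoke controls only the boundary increments of the cycle, not the whole block of negated steps in the interior. The paper's device is different: it proves the path identity
\[
(S_1,\dots,S_{\hat\theta_1},\hat\theta_1)\ \stackrel{\D}{=}\ (-S_{\hat\theta_1},\dots,-S_1,\hat\theta_1)\quad\text{under }\Pt,
\]
via the \emph{time-reversal} duality
\[
\E\{S_2\in dx_2,\dots,S_{k-1}\in dx_{k-1}\mid S_1=x_1,S_k=x_k\}
=\E\{S_{k-1}\in -dx_2,\dots,S_2\in -dx_{k-1}\mid S_1=-x_k,S_k=-x_1\},
\]
which holds for \emph{any} random walk because reversing the order of i.i.d.\ increments preserves their joint law. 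The right-continuous/right-exponential hypothesis enters only to match the \emph{endpoint} laws---$(S_1,S_{\hat\theta_1})$ versus $(-S_{\hat\theta_1},-S_1)$---which is automatic in the right-continuous case (both are $(1,-1)$) and comes from the exponential density factor $e^{x_k-x_1}$ in the right-exponential case. So the correct involution is global time-reversal plus negation of the whole cycle, not a block swap.

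This also weakens your tail argument. You assert each arc has a $c/\sqrt{n}$ tail, but the non-positive arc $\theta_-$ is a first passage from a \emph{random} undershoot $S_\tau$, and extracting $\mathcal{DN}(1/2)$ with the sharp constant is not immediate from Rogozin alone. The paper instead reads $\hat\theta_+\stackrel{\D}{=}\hat\theta_-$ directly off the path identity above and then combines it with the Sparre-Andersen asymptotic for $\theta_+$; the explicit constant in the right-exponential case follows from the ladder-height computation essentially as you outline.
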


Here is an explanation of this result. The i.i.d. property follows as each cycle starts with the overshoot $S_{\Theta_k+1}$ which is independent of the preceding part $S_1, \dots , S_{\Theta_k}$ of the trajectory. The symmetry holds by
\be \label{symm}
\bigl(S_1, \dots , S_{\hat{\theta}_1}, \hat{\theta}_1 \bigr)  \stackrel{\D}{=} \bigl(-S_{\hat{\theta}_1}, \dots, -S_1, \hat{\theta}_1 \bigr) \quad \mbox{under } \Pt,
\ee
where $\hat{\theta}_1:=\max\{n \le \theta_1: S_{\Theta_0 + n} <0 \}$. Of course $\hat{\theta}_1=\theta_1$ for a right-exponential $S_1$ while for a right-continuous $S_1$, $\hat{\theta}_1$ equals the length of the first cycle by the last return to zero. The key observation is the  duality relation
\beaa
&& \E\{ S_2 \in d x_2, \dots, S_{k-1} \in d x_{k-1} | S_1=x_1, S_k=x_k\} \\
&=& \E\{ S_{k-1} \in -d x_2, \dots, S_2 \in -d x_{k-1} | S_1=-x_k, S_k=-x_1\},
\eeaa
which holds for any random walk and follows from the standard duality principle that states $(S_2 -S_1, \dots, S_k-S_1) \stackrel{\D}{=} (S_k - S_{k-1}, \dots, S_k-S_1)$. The duality immediately implies \eqref{symm} if $S_1$ is right-continuous, while for the right-exponential case we use the fact that
\beaa
&& \Pt \{ S_1 \in d x_1, \dots, S_k \in d x_k, \hat{\theta}_1 = k\} \\
&=& \P(S_1>0) e^{x_k-x_1} \E\{ S_2 \in d x_2, \dots, S_{k-1} \in d x_{k-1} | S_1=x_1, S_k=x_k\} d x_1 \P \{ S_k - S_1 \in d x_k - x_1 \}.
\eeaa

Relation \eqref{symm} is extremely useful as it essentially states that the negative part of a cycle has the same distribution as the time-reversed positive part. Hence for a right-exponential $S_1$ we immediately get $\theta_+ \stackrel{\D}{=} \theta_-$ for the lengths of the parts that are defined as $$\theta_+:= \min \{n \ge 1: S_{\Theta_0 + n} \ge 0,  S_{\Theta_0 + n+ 1} <0 \}, \quad \theta_-:= \theta_1 - \theta_+.$$ To cover the right-continuous case, define
$$\hat{\theta}_+:=\max\{n \le \theta_+: S_{\Theta_0 + n} >0 \}, \quad \hat{\theta}_-:=\hat{\theta}_1 - \theta_+$$ to be the lengths of the parts by their last returns to zero. Then we have $\hat{\theta}_+  \stackrel{\D}{=} \hat{\theta}_-$, which clearly covers the first case as $\hat{\theta}_+ = \theta_+$ and $\hat{\theta}_- = \theta_-$ a.s. for a right-exponential $S_1$.

Since the Sparre-Andersen theorem implies that $\theta_+ \in \mathcal{DN}(1-1/\alpha)$, by $\hat{\theta}_+  \stackrel{\D}{=} \hat{\theta}_-$ the same holds for $\theta_-$. Hence one also expects $\theta_1 \in \mathcal{DN}(1-1/\alpha)$ as stated in Lemma~\ref{IID} for $\alpha=2$. We prove this for $1 < \alpha < 2$ in the next section. Note that there is a significant difference in the shape of long cycles: for $\alpha = 2$ the walk essentially stays either positive or negative while for $1 < \alpha <2$ it spends positive parts of time in both half-planes.

\subsection{Bivariate walks staying in the right half-plane}
We see that  under the conditions of Lemma~\ref{IID}, $(\Theta_k, \Psi_k)$ is a bivariate (two-dimensional) random walk. Its second component is symmetric, and the walk starts at $(\Theta_0, \Psi_0) = (0, 0)$ under $\Pt$. It turns out that such walks enjoy a useful property stated below in Proposition~\ref{INDEP}. This result actually is a slight improvement of the famous Sparre-Andersen theorem which states, in particular, that the probabilities $q_N$ that a symmetric continuous random walk stays positive until time $N$ do not depend on the distribution of the increments. Proposition~\ref{INDEP} is inspired by Lemma~3 (see \eqref{Sinai L} below) by Sinai~\cite{Sinai}; we also refer to Feller~\cite[Ch. XII]{Feller} for appropriate definitions and general ideas. It is worth mentioning that Sinai's lemma is a special case of the results by Greenwood and Shaked~\cite{Greenwood} who give a half-plane Wiener-Hopf type factorization for bivariate distributions. This reference was pointed out to the author by Vitaliy Wachtel.

\begin{prop}\label{INDEP}
Let $(S_n^{(1)}, S_n^{(2)})$ be a bivariate random walk such that $(S_1^{(1)}, S_1^{(2)}) \stackrel{\D}{=} (S_1^{(1)}, -S_1^{(2)})$. Then for any $n \ge 1$ and $x \in \R$ it holds
\be \label{indep1}
\P \Bigl \{ S_n^{(1)} \in dx, \min_{1 \le i \le n} S_i^{(2)} \ge 0 \Bigr\} \ge \P \Bigr\{  S_n^{(1)} \in dx \Bigr\} \P \Bigl \{ \min_{1 \le i \le n} S_i^{(2)} > 0 \Bigl \}
\ee
and
\be \label{indep2}
\P \Bigl \{ S_n^{(1)} \in dx, \min_{1 \le i \le n} S_i^{(2)} > 0 \Bigr\} \le \P \Bigr\{  S_n^{(1)} \in dx \Bigr\} \P \Bigl \{ \min_{1 \le i \le n} S_i^{(2)} \ge 0 \Bigl \}.
\ee
\end{prop}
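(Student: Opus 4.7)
The plan is to combine a standard time-reversal identity with a cyclic-permutation (cycle-lemma) argument in the spirit of Sinai's Lemma~3 and, more generally, the half-plane Wiener--Hopf factorization of Greenwood--Shaked~\cite{Greenwood}. I would prove \eqref{indep1} first; the companion inequality \eqref{indep2} follows by an entirely parallel argument (or by applying \eqref{indep1} to the walk with the second coordinate negated and invoking the symmetry).

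\emph{Step 1 (time-reversal).} The standard duality $(X_1, \ldots, X_n) \stackrel{\D}{=} (X_n, \ldots, X_1)$ for i.i.d.~bivariate increments gives $(\tilde S_1, \ldots, \tilde S_n) \stackrel{\D}{=} (S_1, \ldots, S_n)$ with $\tilde S_k = S_n - S_{n-k}$. Since $\tilde S_n^{(1)} = S_n^{(1)}$ and $\min_{1 \le i \le n} \tilde S_i^{(2)} \ge 0$ is equivalent to $S_n^{(2)} \ge S_j^{(2)}$ for all $0 \le j < n$, the target inequality is equivalent to
$$\P\bigl\{S_n^{(1)} \in dx,\ S_n^{(2)} \ge S_j^{(2)}\ \forall j < n\bigr\} \ \ge\ \P\bigl\{S_n^{(1)} \in dx\bigr\}\,\P\bigl\{S_n^{(2)} > S_j^{(2)}\ \forall j < n\bigr\}.$$
Thus the question becomes: is $S_n^{(2)}$ being a (weak) maximum of $S_0^{(2)}, \ldots, S_n^{(2)}$ positively associated with the value of $S_n^{(1)}$?

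\emph{Step 2 (cyclic averaging).} For each realization of the increments, let $N(\omega)$ be the number of cyclic rotations $k \in \{0, \ldots, n-1\}$ after which the rotated walk satisfies the weak-max condition $T_n^{(2)} \ge T_j^{(2)}$ for $j < n$, and let $N'(\omega) \le N(\omega)$ be the analogous count with strict inequality. Cyclic rotation preserves the i.i.d.~joint law and leaves $S_n^{(1)}$ fixed, so
$$n\,\P\bigl\{S_n^{(1)} \in dx,\ S_n^{(2)} \ge S_j^{(2)}\ \forall j<n\bigr\} = \E\bigl[\mathbbm{1}_{\{S_n^{(1)} \in dx\}} N(\omega)\bigr], \qquad n\,\P\bigl\{S_n^{(2)} > S_j^{(2)}\ \forall j<n\bigr\} = \E[N'(\omega)].$$
Hence \eqref{indep1} reduces to the covariance-type statement $\E[\mathbbm{1}_{\{S_n^{(1)} \in dx\}} N] \ge \P\{S_n^{(1)} \in dx\}\,\E[N']$.

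\emph{Step 3 (cycle lemma via sign symmetry).} The assumption $(X_1^{(1)}, X_1^{(2)}) \stackrel{\D}{=} (X_1^{(1)}, -X_1^{(2)})$ lets us write $X_i^{(2)} = \epsilon_i |X_i^{(2)}|$ with $\epsilon_i$ i.i.d.~Rademacher, conditionally independent of $(X^{(1)}, |X^{(2)}|)$. Conditioning on $(X^{(1)}, |X^{(2)}|)$ makes $\mathbbm{1}_{\{S_n^{(1)} \in dx\}}$ deterministic, while $N, N'$ become counts depending only on $\epsilon$ and $|X^{(2)}|$. A cycle-lemma computation (essentially, for each $k$ the event ``weak-max for rotation $k$'' corresponds to $\bar S^{(2)}_{k+n}$ being the weak maximum of the periodic extension $\bar S^{(2)}_{k}, \ldots, \bar S^{(2)}_{k+n}$) combined with sign-flip invariance yields, pointwise in the conditioning, the inequality $\E_{\epsilon}[N] \ge \E_\epsilon[N']$. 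Since the first factor is deterministic given the conditioning, integrating out gives the desired covariance inequality.

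\emph{Main obstacle.} The crux is the combinatorial comparison in Step~3, namely establishing that the sign-averaged count of weak-max cyclic rotations of the second coordinate dominates the sign-averaged count of strict-max rotations \emph{uniformly in the first-coordinate outcome}. In the continuous (tie-free) case the statement collapses to a clean independence assertion, $\P\{S_n^{(1)} \in dx,\ S_n^{(2)} \text{ strict max}\} = \P\{S_n^{(1)} \in dx\}\,\P\{S_n^{(2)} \text{ strict max}\}$, which is precisely the content of the bivariate Wiener--Hopf factorization of~\cite{Greenwood} specialized to a second coordinate with symmetric increments. For the general (possibly atomic) case the gap $N - N'$ accounts for rotations where the walk achieves the maximum more than once, and the delicate point is to show that this extra contribution appears on the correct side of the inequality, compatibly with the $\ge$ vs.~$>$ convention in \eqref{indep1}--\eqref{indep2}.
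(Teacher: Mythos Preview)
Your approach is genuinely different from the paper's. The paper works entirely with generating functions: it expresses the characteristic function $\chi(u,t)$ of the measure on the left of \eqref{indep1} via the bivariate Sparre--Andersen identity (Sinai's Lemma~3), uses the symmetry of $S^{(2)}$ to split $\P\{S_n^{(2)}\ge 0\}=\tfrac12+\tfrac12\P\{S_n^{(2)}=0\}$, and arrives at a factorization $1+\chi(u,t)=(1+\tilde\chi(u,t))\cdot\Phi(u,t)$ where $\Phi$ is a product of terms of the form $(1-\psi)^{-1/2}$ with $\psi$ a characteristic function of a nonnegative measure. Positivity of the Maclaurin coefficients of $(1-z)^{-1/2}$ then shows $\Phi-1$ is itself the transform of a nonnegative measure, which yields \eqref{indep1} directly; \eqref{indep2} follows by the parallel factorization. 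No combinatorics enters.

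Your combinatorial route, however, has a genuine gap at Step~3. The reduction in Step~2 is correct, but the conclusion ``$\E_\epsilon[N]\ge \E_\epsilon[N']$, hence integrating out gives the covariance inequality'' does not follow. The pointwise inequality $N\ge N'$ is trivial and gives only $\E\bigl[\mathbbm{1}_{\{S_n^{(1)}\in dx\}}\,\E_\epsilon[N]\bigr]\ge \E\bigl[\mathbbm{1}_{\{S_n^{(1)}\in dx\}}\,\E_\epsilon[N']\bigr]$, whereas you need the right-hand side to dominate $\P\{S_n^{(1)}\in dx\}\cdot\E[N']$. These differ unless $\E_\epsilon[N']$ (as a function of $|X^{(2)}|$) is uncorrelated with $\mathbbm{1}_{\{S_n^{(1)}\in dx\}}$ --- but $|X^{(2)}|$ and $X^{(1)}$ may be arbitrarily dependent under the hypothesis, so this is exactly the issue. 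Concretely, for $n=2$ one computes $\E_\epsilon[N']=3/4$ when $|X^{(2)}|=(1,2)$ but $\E_\epsilon[N']=1/2$ when $|X^{(2)}|=(1,1)$, so $\E_\epsilon[N']$ is not constant in $a$ and the ``deterministic first factor'' observation buys nothing. Your Main Obstacle paragraph essentially concedes this; note also that invoking Greenwood--Shaked for the continuous case already imports the Wiener--Hopf machinery that the paper uses throughout, so the combinatorial layer adds no independent leverage. To make the cyclic approach work you would need a much stronger statement --- e.g.\ that $\E_\epsilon[N]+\E_\epsilon[N']$ is constant in $a$ (which appears plausible from small cases and would indeed close the argument) --- but this is neither stated nor proved.
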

\begin{cor*}
If the distribution of $S_1^{(2)}$ is continuous, then $\bigl \{ S_n^{(1)} \in dx \bigr\}$ and $\bigl \{ \min \limits_{1 \le i \le n} S_i^{(2)} > 0 \bigr\}$ are independent.
\end{cor*}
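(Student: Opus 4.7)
The plan is to derive the corollary directly from the two inequalities \eqref{indep1} and \eqref{indep2} by observing that under the continuity hypothesis, the strict and weak inequalities defining the minimum-event coincide up to a null set.

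First I would argue that if $S_1^{(2)}$ has a continuous distribution, then so does $S_i^{(2)}$ for every $i \ge 1$, since the distribution of $S_i^{(2)}$ is the $i$-fold convolution of that of $S_1^{(2)}$ and convolution with a (marginal) continuous distribution produces a continuous distribution. Consequently $\P\{S_i^{(2)} = 0\} = 0$ for each $i = 1,\ldots,n$, so by a union bound
\[
\P\Bigl\{\min_{1 \le i \le n} S_i^{(2)} = 0\Bigr\} \le \sum_{i=1}^{n} \P\{S_i^{(2)} = 0\} = 0.
\]
This single fact is all I will use about continuity; note that no independence of the two coordinates is assumed, so I cannot condition on $S_n^{(1)}$ for this step.

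Next I would use the above to collapse the two inequalities in Proposition~\ref{INDEP} into one identity. The signed measures
\[
\mu(dx) := \P\bigl\{S_n^{(1)} \in dx,\ \min_{1 \le i \le n} S_i^{(2)} \ge 0\bigr\} - \P\bigl\{S_n^{(1)} \in dx,\ \min_{1 \le i \le n} S_i^{(2)} > 0\bigr\}
\]
is non-negative and has total mass at most $\P\{\min_{1 \le i \le n} S_i^{(2)} = 0\} = 0$, so the two joint distributions on the left of \eqref{indep1} and \eqref{indep2} are identical as measures in $x$. Similarly $\P\{\min \ge 0\} = \P\{\min > 0\}$ on the right. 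Therefore \eqref{indep1} and \eqref{indep2} become
\[
\P\bigl\{S_n^{(1)} \in dx,\ \min_{1 \le i \le n} S_i^{(2)} > 0\bigr\} = \P\{S_n^{(1)} \in dx\}\, \P\Bigl\{\min_{1 \le i \le n} S_i^{(2)} > 0\Bigr\},
\]
which, read as an identity of measures and integrated over an arbitrary Borel set, is exactly the asserted independence of $S_n^{(1)}$ and the event $\{\min_{1 \le i \le n} S_i^{(2)} > 0\}$.

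There is essentially no obstacle here: the whole proof is the remark that continuity kills the boundary event $\{\min = 0\}$, after which the inequalities of Proposition~\ref{INDEP} are forced to be equalities. The only point to watch is that continuity is needed for $S_i^{(2)}$ individually for $i \le n$, not for the pair, which is why I invoke the convolution argument on the marginal rather than trying to say anything about the joint law.
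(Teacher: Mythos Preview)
Your proposal is correct and is exactly the argument the paper intends: the corollary is stated without a separate proof, the implicit reasoning being that continuity of $S_1^{(2)}$ forces $\P\{\min_{1\le i\le n} S_i^{(2)}=0\}=0$, so the left- and right-hand sides of \eqref{indep1} and \eqref{indep2} coincide and the two opposite inequalities become an equality. Your write-up simply makes this one-line observation explicit.
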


\begin{proof}
It is useful to consider the inequalities simultaneously for all $n \ge 1$ and think that the sides of \eqref{indep1} define the two measures $Q(dx, n)$ and $\tilde{Q}(dx, n)$ on $\R \times \N$. We start by noting that the characteristic function $$\chi(u,t):=\sum_{n=1}^\infty \int_{-\infty}^\infty t^n e^{iux} \P \Bigl \{ S_n^{(1)} \in dx, \min_{1 \le i \le n} S_i^{(2)} \ge 0 \Bigr\} $$ of the measure $Q(dx, n)$ in the left-hand sides of \eqref{indep1} satisfies
$$1 + \chi(u,t) = \frac{1}{1-\chi_{U_1, T_1} (u, t)},$$
where $T_1$ is the first weak ascending ladder epoch of the walk $S^{(2)}$ and $U_1 := S_{T_1}^{(1)}$. This follows by the standard argument of considering the equally distributed dual walk $(S_n^{(1)} - S_{n-k}^{(1)}, S_n^{(2)} - S_{n-k}^{(2)})_{ 1 \le k \le n}$ and getting
\bea
\P \Bigl \{ S_n^{(1)} \in dx, \min_{1 \le i \le n} S_i^{(2)} \ge 0\Bigr\} &=& \P \Bigl \{  S_n^{(1)} \in dx, \max_{1 \le i \le n-1} S_i^{(2)} \le S_n^{(2)} \Bigr\} \notag\\
&=& \sum_{k=1}^\infty \P \Bigl \{  U_1 + \dots + U_k \in dx, T_1 + \dots + T_k =n \Bigr\}, \label{expln}
\eea
where $(U_k, T_k)_{k \ge 1}$ are i.i.d. random vectors.

Further, Lemma 3 by Sinai~\cite{Sinai}, which is just a two-dimensional version of the Sparre-Andersen theorem (see Feller~\cite[Sec. XII.7, Theorem 1]{Feller}), states
\be \label{Sinai L}
\log \frac{1}{1-\chi_{U_1, T_1}(u,t)} = \sum_{n=1}^\infty \int_{-\infty}^\infty \frac{t^n}{n} e^{iux} \P \Bigl \{ S_n^{(1)} \in dx, S_n^{(2)} \ge 0 \Bigr\},
\ee
hence with the symmetry of $S_n^{(2)}$,
$$\log \frac{1}{1-\chi_{U_1, T_1}(u,t)} = \sum_{n=1}^\infty \frac{t^n}{2n} \chi_{S_1^{(1)}}^n(u) + \sum_{n=1}^\infty \int_{-\infty}^\infty \frac{t^n}{2n} e^{iux} \P \Bigl \{ S_n^{(1)} \in dx, S_n^{(2)} = 0 \Bigr\}.$$
The second term in the right-hand side can be transformed we did in \eqref{expln} and we get
$$
\log \frac{1}{1-\chi_{U_1, T_1}(u,t)} = \frac12 \log \frac{1}{ 1 - t\chi_{S_1^{(1)}}(u)} + \frac12 \log \frac{1}{1-\chi_{U_1^*, T_1^*}(u,t)},
$$
where $\chi_{U_1^*, T_1^*}(u,t)$ is the characteristic function of the non-probability measure $$Q^*(dx, k):= \P \bigl\{ S_k^{(1)} \in dx , S_1^{(2)}<0, \dots, S_{k-1}^{(2)}<0, S_k^{(2)}=0\bigr\}$$ on $\R \times \N$ which in a certain sense is the distribution of the defective random vector $(U_1^*, T_1^*)$, where $T_1^*$ is the first moment when $S_n^{(2)}$ hits zero from below and $U_1^*=S_{T_1^*}^{(1)}$. Then
\be \label{cfLHS}
1 + \chi(u,t) = \sqrt{ \frac{1}{\bigl(1 - t\chi_{S_1^{(1)}}(u)\bigr)\bigl(1-\chi_{U_1^*, T_1^*}(u,t)\bigr)}},
\ee
and similarly, the characteristic function $\chi^+(u,t)$ of the measure $Q^+(dx,n)$ in the left-hand sides of \eqref{indep2} satisfies
\be \label{cfLHS2}
1 + \chi^+(u,t) = \sqrt{ \frac{1-\chi_{U_1^*, T_1^*}(u,t)}{1 - t\chi_{S_1^{(1)}}(u)}}.
\ee

Finally, the characteristic function of the measure $\tilde{Q}(dx,n)$ in the right-hand sides of \eqref{indep1} is
$$\tilde{\chi}(u,t) = \sum_{n=1}^\infty  t^n \chi_{S_1^{(1)}}^n(u) \P \Bigl \{ \min_{1 \le i \le n} S_i^{(2)} > 0 \Bigl \} =\chi_{T_1^+} \bigl(t \chi_{S_1^{(1)}}(u) \bigr),$$
where $T_1^+$ is the first strong ascending ladder epoch of $S^{(2)}$, and by \eqref{cfLHS}, \eqref{cfLHS2} and $\chi_{T_1^+} (t) = \chi^+ (0, t)$, we get
\be \label{cg=cg}
1 + \chi(u,t) = \bigl(1 + \tilde{\chi}(u,t) \bigr) \sqrt{ \frac{1}{ \bigl( 1-\chi_{T_1^*}(t \chi_{S_1^{(1)}}(u)) \bigr ) \bigl( 1-\chi_{U_1^*, T_1^*}(u,t) \bigr)}}.
\ee
Since all the coefficients of the Maclaurin series of $(1-z)^{-1/2}$ are positive, the square root factor in the right-hand side of \eqref{cg=cg} has the form
$$1 + \sum_{k, m \ge 1} a_{k, m} \chi^k_{T_1^*}(t \chi_{S_1^{(1)}}(u)) \chi^m_{U_1^*, T_1^*}(u,t)=: 1 + \phi(u,t)$$ with some $a_{k,m} >0$. As products of characteristic functions correspond to convolutions of measures, $\phi(u,t)$ is the characteristic function of some {\it measure} $D(dx,n)$ on $\R \times \N$. Then $$Q(dx, n) = \tilde{Q}(dx,n)+ D(dx, n) + \tilde{Q}(dx,n)*D(dx, n) \ge \tilde{Q}(dx,n)$$ implying \eqref{indep1}. A similar argument concludes \eqref{indep2}.
\end{proof}

Note that \eqref{cfLHS} and \eqref{cfLHS2} actually follow from Eq. (4) by Greenwood and Shaked~\cite{Greenwood} with $\tau$ and $\nu$ from their Example (a) on p. 568, but we have chosen to start from the Sinai lemma to go along the lines of our original proof.

\subsection{Weak asymptotics of $p_N$}
The next statement is not new since it follows from Theorem 1.2 by Dembo and Gao~\cite{Dembo}. As explained in the introduction, we give the proof here to show a simple and very intuitive way to understand the asymptotics of $p_N$ and demonstrate advantage of our technique.

\begin{prop} \label{WEAK}
If $S_1 \in \mathcal{R}_\alpha$ for some $1 < \alpha \le 2$, then $ p_N \asymp N^{\frac{1}{2\alpha} - \frac12}.$
\end{prop}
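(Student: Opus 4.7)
The plan is to combine the cycle decomposition of Lemma~\ref{IID} with the bivariate Sparre--Andersen inequalities of Proposition~\ref{INDEP}. Working under $\Pt$, one has $\Theta_0=0$ and $p_N = \P(S_1>0)\,\Pt(E_N)$ with $E_N := \{A_n>0,\ 1 \le n \le N\}$; it suffices to show $\Pt(E_N) \asymp N^{-\rho}$, where $\rho := \tfrac12 - \tfrac{1}{2\alpha} = \tfrac{\beta}{2}$ and $\beta := 1 - 1/\alpha$. I use $\theta_1 \in \mathcal{DN}(\beta)$ (given by Lemma~\ref{IID} for $\alpha=2$ and proved in the forthcoming section for $1<\alpha<2$) and the resulting stable convergence $\Theta_m/m^{1/\beta} \Rightarrow \tau$. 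The structural key is that within any cycle $S$ has no crossings from below, so the cycle consists of a single positive excursion (during which $A$ strictly increases) followed by a non-positive phase (during which $A$ weakly decreases); $A$ is thus unimodal on each cycle and, combined with positivity of the overshoot $S_{\Theta_k+1}$, this gives under $\Pt$ the clean identification
$$\{A_n > 0 \text{ for all } 1 \le n \le \Theta_m\} = \{\Psi_1>0,\dots,\Psi_m>0\},$$
converting the persistence question for $A$ into one for the symmetric cycle-area walk $\Psi_k$.

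For the lower bound, set $m_0 := \lceil CN^\beta \rceil$ with $C$ to be chosen large. Since $E_{\Theta_{m_0}} \cap \{\Theta_{m_0} \ge N\} \subseteq E_N$, the identity yields $\Pt(E_N) \ge \Pt(\Psi_1,\dots,\Psi_{m_0}>0,\ \Theta_{m_0}\ge N)$. Rewriting this as $\Pt(\min_i\Psi_i>0) - \Pt(\min_i\Psi_i>0,\ \Theta_{m_0}<N)$ and applying \eqref{indep2} to the second term, one gets the lower bound $\Pt(\min_i\Psi_i>0) - \Pt(\Theta_{m_0}<N)\,\Pt(\min_i\Psi_i\ge 0)$. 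Both Sparre--Andersen probabilities are $\asymp m_0^{-1/2}$ by symmetry of $\Psi$, while the stable limit gives $\Pt(\Theta_{m_0}<N) \to \P(\tau < C^{-1/\beta})$, which is arbitrarily small for $C$ large; choosing $C$ large enough yields $\Pt(E_N) \gtrsim m_0^{-1/2} \asymp N^{-\rho}$.

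For the upper bound, $E_N \subseteq \{\Psi_1,\dots,\Psi_{M_N}>0\}$ with $M_N := \max\{k : \Theta_k \le N\}$. Decomposing on $\{M_N = j\}$, exploiting independence of $\theta_{j+1}$ from $(\theta_i,\psi_i)_{i\le j}$ and applying \eqref{indep2} to the bivariate walk $(\Theta_k,\Psi_k)$ at step $j$, each term obeys
$$\Pt(E_N, M_N = j) \le \Et\bigl[\Pt(\theta_1>N-\Theta_j)\,\I_{\Theta_j\le N,\,\min_i\Psi_i>0}\bigr] \le \Pt(\min_i\Psi_i\ge 0)\,\Pt(M_N=j) \lesssim j^{-1/2}\Pt(M_N=j).$$
Combined with $\Pt(M_N=0) = \Pt(\theta_1>N) \asymp N^{-\beta} = o(N^{-\rho})$, summation gives $\Pt(E_N) \lesssim \Et[(M_N\vee 1)^{-1/2}]$. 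The inverse stable scaling $M_N/N^\beta \Rightarrow \tau^{-\beta}$ and standard tail estimates then yield $\Et[(M_N\vee 1)^{-1/2}] \asymp N^{-\beta/2} = N^{-\rho}$; the key quantitative input is $\E[\tau^{\beta/2}]<\infty$, which holds since $\beta/2$ is strictly below the critical moment exponent $\beta$ of the positive $\beta$-stable law.

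The main technical obstacle I expect is the uniform control required in the upper bound: a uniform small-value bound $\Pt(M_N \le \epsilon N^\beta) \lesssim \epsilon$ (equivalently $\Pt(M_N=j) \lesssim N^{-\beta}$ for $j \lesssim N^\beta$), stemming from density estimates for the stable limit, which is needed to conclude the $\Et$-scaling. The $\ge 0$ versus $>0$ distinction for $\Psi$ in the right-continuous (integer-valued) case is benign, since both Sparre--Andersen probabilities share the $m^{-1/2}$ rate and their ratio stays bounded away from zero.
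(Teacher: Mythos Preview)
Your approach is essentially the paper's: the same cycle decomposition, the same reduction identity (your unimodality observation is exactly \eqref{reduction} under $\Pt$), the same use of Proposition~\ref{INDEP} to decouple $\Theta$ from the positivity of $\Psi$, and the same reduction of the upper bound to $\Et[(\eta(N)\vee 1)^{-1/2}]$. Two minor differences are worth noting. First, for the lower bound you fix a deterministic $m_0\asymp N^\beta$ and subtract via \eqref{indep2}, whereas the paper instead ``flips'' the last incomplete cycle using the conditional symmetry of $\psi_i$ to obtain the clean sandwich \eqref{disregard} with an explicit factor $1/2$; both arguments are valid. Second, the paper avoids invoking $\theta_1\in\mathcal{DN}(\beta)$ (which is only established later in Lemma~\ref{CYCLE}) by working instead with $\theta_+,\theta_-\in\mathcal{DN}(\beta)$ separately through $\eta^\pm(N)$. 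For the uniform-integrability obstacle you flag, the paper's resolution is not via density estimates for the limit but via a direct Chebyshev-type tail bound for stable sums (the Tkachuk inequality): $\Pt\{\Theta_k^\pm > Rk^{1/\beta}\}\le cR^{-\beta}$ uniformly in $k$ and $R\ge K$, which gives precisely your $\Pt\{M_N\le\varepsilon N^\beta\}\lesssim\varepsilon$.
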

\begin{rem*}
If $S_1$ is right-exponential and $S_1 \in \mathcal{R}_2$, then
\be \label{c}
\Bigl [ \varliminf_{N \to \infty} p_N  N^{\frac12 - \frac{1}{2\alpha}} , \varlimsup_{N \to \infty} p_N  N^{\frac12 - \frac{1}{2\alpha}}\Bigr] \subset \frac{2^{1/4} }{\pi} \Gamma{\Bigl(\frac14 \Bigr)} \sqrt{\frac{\sigma}{\E|S_1|}}  \P\{ S_1 >0\} \times \Bigl[\frac12, 1 \Bigr].
\ee
\end{rem*}

\begin{proof}
The key observation is that
\be \label{reduction}
\P \Bigl \{ \min \limits_{1 \le k \le N} A_k  > 0 \Bigr \} = \P \Bigl \{ \min_{1 \le k \le \eta(N)} A_{\Theta_k}  > 0 , \, A_1 >0 , A_N >0  \Bigr \},
\ee
where $$\eta(N):= \max \{n \ge 0: \Theta_n \le N \}.$$ Under $\Pt$, this quantity is just the number of up-crossing of the zero level by the walk $S_n$ by the time $N$. Then
$$ \Pt \Bigl \{ \min \limits_{1 \le k \le \eta(N)+1} \Psi_k  > 0 \Bigr \} \le \frac{p_N}{\P\{ S_1 >0 \}} \le \Pt \Bigl \{ \min \limits_{1 \le k \le \eta(N)} \Psi_k  > 0 \Bigr \}.$$ For the lower bound, our idea is to flip the last incomplete cycle using the conditional symmetry of $\psi_i$ (Lemma~\ref{IID}) to make sure it has a positive area: condition on $\eta(N)$ and $\Theta_{\eta(N)}$ and get
\bea \label{condit}
&& \Pt \Bigl \{ \min \limits_{1 \le k \le \eta(N)} \Psi_k  > 0, \psi_{\eta(N)+1} \ge 0 \Bigr \} \notag \\
&=& \sum_n \sum_{i \le N} \Pt \Bigl \{\Theta_n = i, \theta_{n+1} > N-i, \min \limits_{1 \le k \le n} \Psi_k  > 0, \psi_{n+1} \ge 0 \Bigr \} \\
&=& \sum_n \sum_{i \le N} \Pt \Bigl \{\Theta_n = i, \min \limits_{1 \le k \le n} \Psi_k  > 0\Bigr \} \Pt \Bigl \{\theta_{n+1} > N-i, \psi_{n+1} \ge 0 \Bigr \} \notag\\
&\ge& \frac12 \Pt \Bigl \{ \min \limits_{1 \le k \le \eta(N)} \Psi_k  > 0 \Bigr \}. \notag
\eea
Hence
\be \label{disregard}
\frac12 \Pt \Bigl \{ \min \limits_{1 \le k \le \eta(N)} \Psi_k  > 0 \Bigr \} \le \frac{p_N}{\P\{ S_1 >0 \}} \le \Pt \Bigl \{ \min \limits_{1 \le k \le \eta(N)} \Psi_k  > 0 \Bigr \}.
\ee

{\it Case 1: $S_1$ is right-exponential.} By conditioning on $\eta(N)$ and using Proposition~\ref{INDEP}, we proceed as above in \eqref{condit} and get the most important relation
\be \label{key}
\Pt \Bigl \{ \min \limits_{1 \le k \le \eta(N)} \Psi_k  > 0 \Bigr \} = \sum_{n = 0}^\infty \Pt \bigl \{\eta(N) = n \bigr \} \Pt \Bigl \{ \min \limits_{1 \le k \le n} \Psi_k  > 0 \Bigr \}.
\ee
As the distribution of $\Phi_k$ is symmetric, the Sparre-Andersen theorem implies the existence of a positive limit $$c_1:=\lim \limits_{n \to \infty} n^{1/2} \Pt \Bigl \{ \min \limits_{1 \le k \le n} \Psi_k  > 0 \Bigr \},$$ hence
\bea
\Pt \Bigl \{ \min \limits_{1 \le k \le \eta(N)} \Psi_k  > 0 \Bigr \} &=& \sum_{k = 0}^\infty \Pt \Bigl \{\eta(N) = n \Bigr \} \frac{c_1 + o(1)}{\sqrt{n+1}} \notag \\
&=& (c_1 + o(1)) \Et \frac{1}{\sqrt{\eta(N)+1}} + O \Bigl(\Pt \bigl \{\eta(N) < \ln N \bigr \} \Bigr) \notag\\
&=& \frac{c_1 + o(1)}{N^{\frac12 - \frac{1}{2\alpha}}} \Et \sqrt{\frac{N^{1-1/\alpha}}{\eta(N)+1}}, \label{keyk}
\eea
where we used that
$$\Pt \bigl \{\eta(N) < \ln N \bigr \}  = \Pt \bigl \{\Theta_{\ln N} > N \bigr \} \le \ln N \, \Pt \bigl \{\theta_1 > N / \ln N \bigr \} = o(N^{\frac{1}{2\alpha}- \frac12}).$$

It remains to check that the expectations stay bounded away from zero and infinity. Let $\Theta_k^+$ and $\Theta_k^-$ be the total length of the first $k$ positive and $k$ negative excursions of $S_n$, respectively. Under $\Pt$ it is true that $\Theta_k^+$ and $\Theta_k^-$ are random walks with the increments distributed as $\theta_+$ and $\theta_-$, respectively, and $\Theta_k= \Theta_k^+ + \Theta_k^-$. Define the numbers of renewal epochs $\eta^+(N)$ and $\eta^-(N)$ analogously to $\eta(N)$, then $\min(\eta^+(N/2), \eta^-(N/2)) \le \eta(N) \le \eta^+(N)$ and it suffices to consider the expectations in \eqref{keyk} with $\eta^+(N)$ and $\eta^-(N)$ instead of $\eta(N)$.

As $\theta_+, \theta_- \in \mathcal{DN}(1-1/\alpha)$, it follows from Feller~\cite[Ch. XI.5]{Feller} that $\eta^+(N)$ and $\eta^-(N)$ satisfy
\be \label{renewal}
\frac{\eta^+(N)}{N^{1-1/\alpha}} \stackrel{\D}{\longrightarrow} \tau^{1/\alpha-1}, \quad \frac{\eta^-(N)}{N^{1-1/\alpha}} \stackrel{\D}{\longrightarrow} \tau^{1/\alpha-1} \qquad \mbox{under } \Pt,
\ee
where $\tau$ is a stable r.v. with index $1-1/\alpha$ that is the weak limit of $\frac{\Theta_n^+}{n^{\alpha/(1-\alpha)}}$. We conclude the proof if check the uniform integrability of $\sqrt{\frac{N^{1-1/\alpha}}{\eta^+(N)+1}}$ and the same for $\eta^-(N)$. For any $0 < x \le N^{\frac12-\frac{1}{2\alpha}}$, we have
\beaa
&& \Pt \Biggl \{\sqrt{\frac{N^{1-1/\alpha}}{\eta^+(N)+1}} \ge x \Biggr \} = \Pt \Bigl \{ \eta^+(N) \le [ x^{-2} N^{1-1/\alpha}] - 1 \Bigr \} \\
&=& \Pt \Bigl \{ \Theta^+_{[ x^{-2} N^{1-1/\alpha} ]} > N \Bigr \} \le \Pt \Bigl \{ \Theta^+_k >  (x^2 k)^{\frac{1}{1-1/\alpha}} \Bigr \},
\eeaa
with $k:=[ x^{-2} N^{1-1/\alpha} ] \ge 1$. The last probability can be estimated by the following analog of the Chebyshev inequality attributed by Nagaev~\cite{Nagaev} to Tkachuk (1977): if $X_n$ are i.i.d. r.v.'s and $X_1 \in \mathcal{DN}(\gamma)$ for $0< \gamma < 1$, then there exist $c, K >0$ such that
$$\P \{X_1 + \dots X_n > R n^{1/\gamma}\} \le c R^{-\gamma}$$
for all $n$ and $R \ge K$. Then the uniform integrability follows as $x^{-2}$ is integrable at infinity.

 {\it Case 2: $S_1$ is right-continuous.} Denote $$r_N:=\Pt \Bigl \{ \min \limits_{1 \le k \le \eta(N)} \Psi_k  > 0 \Bigr \}, \quad \bar{r}_N:=\Pt \Bigl \{ \min \limits_{1 \le k \le \eta(N)} \Psi_k  \ge 0 \Bigr \}$$ and use \eqref{indep1} to replace \eqref{key} by the appropriate inequality; then get an analog of \eqref{keyk} with ``$=$'' and $c_1$ replaced by ``$\le$'' and $\bar{c_1}$, respectively, and by the uniform integrability conclude with $r_N \lesssim N^{\frac12 - \frac{1}{2\alpha}}$. The same argument implies $N^{\frac12 - \frac{1}{2\alpha}} \lesssim \bar{r}_N$, and since $$\bar{r}_N \ge r_N \ge \P \{\psi_1 >0 \} \bar{r}_N,$$ we obtain $\bar{r}_N \asymp  r_N \asymp N^{\frac12 - \frac{1}{2\alpha}}$.
\end{proof}

Let us prove \eqref{c} in the remark. By Lemma~\ref{IID}, it is true that $c_2:=\lim \limits_{n \to \infty} n^{1/2} \P \{\theta_1 >n\} = \sqrt{\frac{8}{\pi}} \frac{\sigma}{\E |S_1|}$ while $c_1=\sqrt{\frac{1}{\pi}}$ as $\Psi_n$ is continuous and symmetric. It remains to compute the limit in \eqref{keyk} using the uniform integrability and \eqref{renewal}, which gives $\eta(N)/N^{1/2} \stackrel{\D}{\to} c_2^{-1} \sqrt{\frac{2}{\pi}}|\mathcal{N}|$ for a standard normal r.v.  $\mathcal{N}$.

\subsection{Positivity of integrated bridges} \label{SSEC BRIDGE}
Let us show how Proposition~\ref{INDEP} can be used to obtain the asymptotics of $$p^*_N=\P \Bigl \{ \min \limits_{1 \le k \le N} A_k  > 0 \bigl | \bigr. S_N = 0\Bigr \}$$ as $N \to \infty$ along $\mathcal{D}_{S_1}$. Recall we assumed that $S_n$ is centered and integer-valued. Let $d$ be the maximal positive integer such that $\P\{S_1 \in d \Z \}= 1$, and let $h$ be the maximal step of $S_1/d$, that is the maximal positive integer such that there exists an $0 \le a \le h-1$ satisfying $\P\{S_1 \in d(a + h \N)\} = 1$. Then $\mathcal{D}_{S_1} \subset h \N$ and $h \N \setminus \mathcal{D}_{S_1}$ is finite.

We need to modify the definition of the regeneration moments considered in Sec.~\ref{PARTITION}. Define the moments of {\it leaving} zero as $\Theta^*_0:=\min\{n \ge 0: S_{n+1} \neq 0\}$ and $\Theta^*_{k+1}:= \min\{n > \Theta_k: S_n = 0, S_{n+1} \neq 0 \}$ for $k \ge 0$, and introduce $\theta^*_k, \psi^*_k, \Psi^*_k, \eta^*(N)$ accordingly. Put $\P^*(\cdot):=\P(\cdot | S_1 \neq 0)$. The following result is completely analogous to Lemma~\ref{IID} and is essentially proved in \cite{IIRW} while the local asymptotics are due to Kesten~\cite{Kesten}.
\begin{lem1'} \label{IID'}
Let $S_n$ be a centered random walk. Then $(\theta^*_n, \psi^*_n)_{n \ge 1}$ are i.i.d. and $(\theta^*_1, \psi^*_1) \stackrel{\D}{=} (\theta^*_1, -\psi^*_1)$. If $Var(S_1)=: \sigma^2 < \infty$, then $\P^*\{\theta^*_1 = hn\} \sim \sqrt{\frac{h}{2 \pi}} \frac{\sigma}{\P\{S_1 \neq 0\} d} n^{-3/2}$ as $n \to \infty$.
\end{lem1'}

We are ready to prove Proposition~\ref{BRIDGE} from Sec. 1 on the asymptotics of $p^*_{hN}$. Similarly to \eqref{reduction}, write
$$\P \Bigl \{ \min \limits_{1 \le k \le hN} A_k  > 0, S_{hN} = 0 \Bigr \} \le \P \Bigl \{ \min_{1 \le k \le \eta^*(hN)} A_{\Theta^*_k}  > 0, A_1 \neq 0, S_{hN} =0  \Bigr \}$$ (which is the equality when $S_1$ is right-continuous) and then condition on the number of returns to zero $\eta^*(hN)$ as in \eqref{condit} to get
$$\P \Bigl \{ \min \limits_{1 \le k \le hN} A_k  > 0, S_{hN} = 0 \Bigr \} / \P\{S_1 \neq 0\}  \le \sum_{n=1}^\infty \P^* \Bigl \{ \min_{1 \le k \le n} \Psi^*_k  > 0, \Theta^*_n = hN  \Bigr \}=:r_N.$$ By Proposition~\ref{INDEP} we get the following analog of \eqref{keyk}:
$$r_N \le (\bar{c}_1 + o(1))\sum_{n=1}^\infty n^{-1/2} \P^* \bigl \{ \Theta^*_n = hN  \bigr \} .$$

Apply Lemma~$1^\prime$ to use the result by Doney~\cite{Doney2} on local large deviation probabilities that states $\P^* \bigl \{ \Theta^*_n = hN  \bigr \} \sim n \P^* \bigl \{ \theta^*_1 = hN  \bigr \}$ as $N \to  \infty$ uniformly in $n=o(\sqrt{N})$. Then the contribution of the terms with $n=o(\sqrt{N})$ is $o(N^{-3/4})$, implying
$$r_N \le (\bar{c}_1 +o(1)) \varlimsup_{\varepsilon \to 0+} \sum_{n= \varepsilon \sqrt{N}}^\infty n^{-1/2} \P^* \bigl \{ \Theta^*_n = hN  \bigr \} + o(N^{-3/4}).$$ Once we have bounded $\sqrt{N}/n$ away from zero, the local limit theorem gives
\beaa
\lim_{\varepsilon \to 0+} \lim_{N \to \infty} N^{3/4} \sum_{n= \varepsilon \sqrt{N}}^\infty n^{-1/2} \P^* \bigl \{ \Theta^*_n = hN  \bigr \} &=&  \lim_{\varepsilon \to 0+} \frac{1}{\sqrt{N}} \sum_{n=\varepsilon \sqrt{N}}^\infty \Bigl(\frac{n}{\sqrt{N}}\Bigr)^{-5/2} n^2 \P^* \bigl \{ \Theta^*_n = hN  \bigr \} \\
&=& \lim_{\varepsilon \to 0+} \frac{1}{\sqrt{N}} \sum_{n=\varepsilon \sqrt{N}}^\infty \Bigl(\frac{n}{\sqrt{N}}\Bigr)^{-5/2} h g\Bigl(\frac{hN}{n^2}\Bigr)\\
&=& h^{1/4} \E \tau^{-5/4},
\eeaa
where $g$ is the density of a strictly stable r.v. $\tau$ with index $1/2$ that is the weak limit of $\Theta^*_n/n^2$.

Thus $r_N \lesssim N^{-3/4}$ and by Gnedenko's local limit theorem, $p^*_{hN} \lesssim N^{-1/4}$. Now assume that $S_1$ is right-continuous to get the
estimate in the other direction. Condition on the $(hN + 1)$st step of the walk to get $p^*_{h N} \ge (\P\{ S_1 = 1\})^2 \bar{p}^*_{hN}$, where $\bar{p}^*_n$ is defined as $p^*_n$ with ``$>$'' replaced by ``$\ge$''. Arguing as above we get $\bar{p}^*_{hN} \gtrsim N^{-1/4}$ which implies that $p^*_{hN} \asymp \bar{p}^*_{hN} \asymp N^{-1/4}$.

\section{Areas and lengths of excursions of asymptotically stable random walks} \label{SEC EXCURSIONS}

This section gathers the preliminary results needed to prove Theorem~\ref{SHARP}. Sec. 3.1 reviews limit theorems on the shape of the trajectories of conditionally positive asymptotically stable random walks. We explain the method of proofs and apply it to get a two-dimensional version, which is used later in Sec.~\ref{SEC SHARP} to describe the bivariate walk $(\Theta_k, \Psi_k)$ conditioned on its second component staying positive.

\subsection{Conditional limit theorems for random walks}

{\it Results and methods.} Let $S_n$ be a random walk such that the first descending ladder moment $T=\min \{k \ge 1: S_k<0 \} < \infty$  a.s. Bolthausen~\cite{Bolt} showed that if $\E S_1 = 0$ and $Var(S_1) = \sigma^2 < \infty$, then
\be \label{boltha}
\mbox{Law} \Bigl( \frac{S_{[n \cdot]}}{n^{1/2}} \Bigl | \Bigr. T \ge n \Bigr)\stackrel{\D}{\longrightarrow} \mbox{Law} \bigl ( \sigma W_+ (\cdot) \bigr )
\ee
in the Skorokhod space $D[0,1]$ as $n \to \infty$, where $W_+$ is a Brownian meander on $[0,1]$ defined below in terms of a standard Brownian motion $W$.

The proof of \cite{Bolt} is based on the following insightfully simple observation. For any $f:[0, \infty) \to \R$ define $\tau_f:=\inf\{ t \ge 0: f(s+t) \ge f(t) \mbox{ for } 0 \le s \le 1 \}$, where $\inf_\varnothing :=\infty$, and $\Gamma(f)(\cdot):=f(\cdot + \tau_f) - f(\tau_f)$ if $\tau_f < \infty$ and $\Gamma(f): \equiv 0$ if otherwise. Then
\be \label{pathtrans}
\mbox{Law} \bigl( S_{[n \cdot]} \bigl | \bigr. T \ge n \bigr) = \mbox{Law} \bigl( \Gamma (S_{[n \cdot]}) \bigr).
\ee
Bolthausen~\cite{Bolt} essentially showed that $\P\{\tau_W < \infty\}=1$ and $\Gamma$ considered as a mapping $C[0, \infty) \to C[0,1]$ is measurable and continuous $\P \{ W \in \cdot \, \}$-a.s. (Wiener measure).
By the linear interpolation, \eqref{boltha} with $W_+ = \Gamma(W)$ immediately follows from the invariance principle in $C[0, \infty)$ and the continuous mapping theorem, see Billingsley~\cite[Sec. 2]{Bil}.

Shimura~\cite{Shimura} used the same method to prove weak convergence of excursions. For any $f:[0, \infty) \to \R$ define $\delta_f:=\inf\{ t \ge 0: f(t)  < 0 \}$ and $\Lambda(f)(\cdot):= \bigl( f( \cdot  \wedge \delta_f) , \, \delta_f \bigr)$. \cite{Shimura} proved that $\P\{\delta_{W_+} < \infty\}=1$ and $\Lambda \Gamma$ considered as a mapping $D[0, \infty) \to D[0, \infty) \times \R$ is measurable and continuous $\P \{ W \in \cdot \, \}$-a.s.; Shimura actually checked continuity along step functions which is sufficient as they are dense in $D[0, \infty)$. Hence under assumptions $\E S_1=0$ and $Var(S_1) = \sigma^2 < \infty$, the continuous mapping theorem implies Shimura's main result
\be \label{Shim}
\mbox{Law} \Bigl( \Bigl(\frac{S_{[n \cdot \wedge T]}}{n^{1/2}}, \frac{T}{n} \Bigr) \Bigl | \Bigr. T \ge n \Bigr) \stackrel{\D}{\longrightarrow} \mbox{Law}  \bigl(\sigma W_+ (\cdot \wedge \delta_{W_+}), \delta_{W_+} \bigr)
\ee
in $D[0, \infty) \times \R$. Now define rescalings $\widehat{\Lambda}_a (f)(\cdot):= \delta_f^{-1/a} f( \cdot \, \delta_f) $, then $\widehat{\Lambda}_a
\Gamma: D[0, \infty) \to D[0, \infty)$ is continuous $\P \{ W \in \cdot \, \}$-a.s. for any $a>0$. As trajectories of $W$ are continuous, $\widehat{\Lambda}_a \Gamma$ is also a.s. continuous as a mapping $D[0, \infty) \to D[0, 1]$, and we restate \eqref{Shim} as
\be \label{Shimura}
\mbox{Law} \Bigl( \Bigl(\frac{S_{[T \cdot]}}{T^{1/2}}, \frac{T}{n} \Bigr) \Bigl | \Bigr. T \ge n \Bigr) \stackrel{\D}{\longrightarrow} \mbox{Law} \bigl(\sigma W_{ex} (\cdot), \delta_{W_+} \bigr),
\ee
in $D[0,1] \times \R$, where $W_{ex}= \widehat{\Lambda}_2 \Gamma(W)$ is a standard Brownian excursion on $[0,1]$. Note that $W_{ex}$ is independent with the length $\delta_{W_+}$ of the excursion of $W_+=\Gamma(W)$ while $\P\{\delta_{W_+} \ge x\} = x^{-1/2}$ for $x \ge 1$, see Bertoin~\cite[Ch. VIII.4]{Bertoin}.

A further refinement is due to Doney~\cite{Doney} who essentially proved that $\P\{ \delta_{S_+} < \infty\} = 1$ and $\Gamma: D[0, \infty) \to D[0, 1]$ and $\Lambda \Gamma: D[0, \infty) \to D[0, \infty) \times \R$ are continuous $\P \{ S \in \cdot \, \}$-a.s. for any strictly stable centered process $S$ with index $1 < \alpha \le 2$. As above, it suffices to check continuity along step functions.  Then $\widehat{\Lambda}_\alpha\Gamma: D[0, \infty) \to D[0, 2]$ is $\P \{ S \in \cdot \, \}$-a.s. continuous as $S_{ex}=\widehat{\Lambda}_\alpha\Gamma(S)$ is constant for $t \ge 1$.

Now assume that $S_n/ (n^{1/\alpha} l(n)) \stackrel{\D}{\to} S(1)$ for some slowly varying function $l(n)$. We restate the result of Doney as we did above with Shimura's \eqref{Shim} in the form
\be \label{Doney}
\mbox{Law} \Bigl( \Bigl(\frac{S_{[T (\cdot \wedge 1)]}}{T^{1/\alpha} l(T)}, \frac{T}{n} \Bigr) \Bigl | \Bigr. T \ge n \Bigr) \stackrel{\D}{\longrightarrow} \mbox{Law} \bigl( S_{ex} (\cdot), \delta_{S_+} \bigr)
\ee
in $D[0,2] \times \R$. Here we have used the fact that $l(T)/l(n) \stackrel{\P}{\to} 1$, which follows as $l(cx)/l(x) \to 1$ uniformly over any interval by the Karamata theorem. As before, $S_{ex}$ is independent with $\delta_{S_+}$, and $\P\{\delta_{S_+} \ge x\} = x^{1/\alpha-1}$ for $x \ge 1$, see Bertoin~\cite[Ch. VIII.4]{Bertoin}.

Assume that $S$ has negative jumps (recall that our main problem concerns spectrally negative $S_1$). \eqref{Doney} is sufficient for the further consideration in Sec.~\ref{SEC SHARP} but it is not hard to give it a slight improvement by proving convergence in $D[0,1] \times \R$. Indeed,
it is easy to check that the restriction from $[0,2]$ on $[0,1]$ is continuous at any point of $$\bigl \{f \in D[0,2]: f(t) \ge 0 \mbox{ on } [0,1), f(1-)>0, f(t) \equiv const < 0 \mbox{ on } [1,2] \bigr\}.$$ It now suffices to prove that
\be \label{no atom}
\P\{S_{ex}(1-)=0\}=0, \quad \P\{S_{ex}(1)=0\}=0
\ee
which means that the overshoot and the undershoot of an excursion are positive a.s.

Consider the walk $S_n:=S(n)$ and write
\bea
\P \Bigl\{ \frac{S_T}{T^{1/\alpha}} \le z \Bigl| \Bigr. \, T \ge n \Bigr\} &=& \sum_{k=n}^\infty \int_0^\infty \P \Bigl\{ \frac{S_{k-1}}{k^{1/\alpha}} \in dx \Bigl| \Bigr. \, S_1 \ge 0, \dots, S_{k-1} \ge 0 \Bigr\} \notag \\
&& \, \times \frac{\P \{ T \ge k-1\} \P \{ S_1 \le -(x+z^-) k^{1/\alpha}\}}{\P\{ T \ge n \}}, \label{stable exc}
\eea
where $z^-:=-(z \wedge 0)$. Recall that $T \in \mathcal{DN}(1-\rho)$, where $\rho:=\P(S_1 \ge 0)$ is the positivity parameter that satisfies (Zolotarev~\cite[Eq. (2.2.30)]{Zol}) $1 - 1/\alpha < \rho \le 1/ \alpha$ as $S_1$ has negative jumps. Now use the conditional local limit Theorem 3 by Vatutin and Wachtel~\cite{Vatutin} on the weak convergence to the endpoint $S_+(1)$ of a stable meander $S_+$ and its refined version Theorem 4 for small $x$ combined with their Theorem 7 that claims $p_+(1, x):= \P (S_+(1) \in dx) /dx \sim c x^{\alpha \rho}$ as $x \to 0+$. By \eqref{Doney} we already know that $S_T/T^{1/\alpha} \stackrel{\D}{\to} S_{ex}(1)$, hence for any point $z$ of continuity of $S_{ex}(1)$,
$$\P \{S_{ex}(1) \le z \} = \lim_{n \to \infty} \frac{c'}{n} \sum_{k=n}^\infty \Bigl( \frac{k}{n} \Bigr)^{\rho-2} \int_0^\infty p_+(1, x) (x+z^-)^{-\alpha} dx
= \frac{c'}{1-\rho} \E (S_+(1)+z^-)^{-\alpha}.$$ The latter is continuous and differentiable so $S_{ex}(1)$ has a density. The argument above also implies $\P\{S_{ex}(1-) \in dx\} = c'/(1-\rho) x^{-\alpha} p_+(1, x)$. Thus \eqref{no atom} is proved.

{\it Extension to two dimensions.} We stress that all the mentioned results follow from the functional stable limit theorems with the use of the continuous mapping theorem. Let us give a little strengthening to \eqref{boltha}. First extend the definitions of $\tau_f$ and $\Gamma$ to the higher dimension: for an $\mathbf{f}=(f^{(1)}, f^{(2)})$, put $\tau_{\mathbf{f}}:= \tau_{f^{(2)}}$ and $\Gamma(\mathbf{f}):=\mathbf{f}(\cdot + \tau_{\mathbf{f}}) - \mathbf{f}(\tau_{\mathbf{f}})$.

Let $\mathbf{S}_n=(S_n^{(1)}, S_n^{(2)})$ be a bivariate random walk that satisfies
\be \label{2D conv}
\Bigl ( \frac{S_{[n \cdot]}^{(1)}}{n^{1/{\alpha_1}} l_1(n)}, \frac{S_{[n \cdot]}^{(2)}}{n^{1/{\alpha_2}} l_2(n)} \Bigr) \stackrel{\D}{\longrightarrow}  \mathbf{S}( \cdot)
\ee
in $D^2[0, \infty)$ for some bivariate stochastic process $\mathbf{S}$, slowly varying functions $l_1(n), l_2(n)$, and $0 < \alpha_1, \alpha_2 \le 2$. By Resnick and Greenwood~\cite{Resnick}, \eqref{2D conv} is equivalent to existence of the finite positive
\be \label{2D tails}
\lim_{n \to \infty} n \P \bigl \{ \epsilon_1 S_1^{(1)} > x n^{1/{\alpha_1}} l_1(n), \, \epsilon_2 S_1^{(2)} > y n^{1/{\alpha_2}} l_2(n)\bigr\}
\ee
for all $\epsilon_1,\epsilon_2 \in \{-1, 1\}$ and $x,y \ge 0$ such that $x+y>0$. \cite{Resnick} also shows that \eqref{2D conv} is equivalent to the weak convergence of the one-dimensional distributions at $t=1$. The limit random vector $\mathbf{S}(1)$ is sometimes called bivariate stable with indices $\alpha_1, \alpha_2$ as its independent copies $\mathbf{S}'(1), \mathbf{S}''(1)$ satisfy $$a_1 \mathbf{S}'(1) + a_2 \mathbf{S}''(1) \stackrel{\D}{=} \bigl ( (a_1^{\alpha_1} + a_2^{\alpha_1})^{1/{\alpha_1}} S^{(1)}(1), \, (a_1^{\alpha_2} + a_2^{\alpha_2})^{1/{\alpha_2}} S^{(2)}(1) \bigr)$$ for any $a_1, a_2 >0$. \cite{Resnick} gave a complete characterization of such bivariate distributions.

By the $\P\{\mathbf{S} \in \cdot \}$-a.s. continuity of $\Gamma: D^2[0, \infty) \to D^2[0, 1]$ we get
\be \label{bolt2D}
\mbox{Law} \Bigl( \Bigl(\frac{S^{(1)}_{[n \cdot]}}{n^{1/\alpha_1} l_1(n)}, \frac{S^{(2)}_{[n \cdot]}}{n^{1/\alpha_2} l_2(n)}\Bigr) \Bigl | \Bigr. T^{(1)} \ge n \Bigr) \stackrel{\D}{\longrightarrow} \mbox{Law} \bigl( \mathbf{S}_+ (\cdot) \bigr)
\ee
in $D^2[0, 1]$, where $T^{(1)}$ is the first ladder moment of $S^{(1)}$ and $\mathbf{S}_+  := \Gamma(\mathbf{S})$. A simple consideration of \eqref{bolt2D} shows that it also holds true if $T^{(1)}$ is replaced by the first {\it strict} ladder moment.

\subsection{Areas of cycles}

The first statement of this section generalizes Proposition~1 by Vysotsky~\cite{IIRW} that covers the case $\alpha =2$. The second statement will be used to describe the last incomplete cycle. We stress that the long cycles of a random walk from $\mathcal{R}_\alpha$ behave very differently for $\alpha =2 $ and $1 < \alpha <2$, and the same is true for the excursions. In the first case the whole cycle is essentially either positive or negative, while for $1 < \alpha <2$ the walk spends positive parts of time in both half-planes, see \eqref{shape}. For $\alpha=2$, a typical excursion is continuous while for $\alpha < 2$, a typical excursion looks like a meander and then it takes only one big step to change its sign, see \eqref{stable exc}.

\begin{lem} \label{CYCLE}
Let $S_n$ be a random walk such that $S_1 \in \mathcal{R}_\alpha$ for some $1 < \alpha \le 2$. Then for any $\epsilon \in \{-1,1\}$ and $s,t \ge 0$ such that $s+t>0$ there exists a finite positive $$F^{sign(\epsilon)}(s,t):= \lim_{n \to \infty} n^{1 - \frac{1}{\alpha}} \P \{ \theta_1 > s n, \epsilon \psi_1 > t n^{1 +\frac{1}{\alpha}}\}.$$
\end{lem}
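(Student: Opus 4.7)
The plan is to separately analyze the positive and negative phases of the first cycle using Doney's conditional functional limit theorem \eqref{Doney}, then combine them via the strong Markov property. Write $\theta_1 = \theta_+ + \theta_-$ and $\psi_1 = \psi_+ + \psi_-$, where $\psi_+ \ge 0$ and $\psi_- \le 0$ are the signed areas of the two phases. The Sparre--Andersen theorem together with $\P\{S_n > 0\} \to 1/\alpha$ gives $\theta_+ \in \mathcal{DN}(1-1/\alpha)$ with $\P\{\theta_+ > n\} \sim c\, n^{-(1-1/\alpha)}$; the symmetry \eqref{symm} yields the same for $\theta_-$.

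Next, apply \eqref{Doney} to the positive phase: under $\Pt$ conditional on $\theta_+ > n$,
\[
\left(\frac{S_{[\theta_+(\cdot \wedge 1)]}}{\theta_+^{1/\alpha}}, \frac{\theta_+}{n}\right) \stackrel{\D}{\to} (c_\alpha S_{ex}, \delta)
\]
in $D[0,1] \times \R$, where $c_\alpha > 0$ is a normalizing constant, $S_{ex}$ is a stable excursion of index $\alpha$ with positivity parameter $1/\alpha$, and $\delta \ge 1$ with $\P\{\delta > s\} = s^{-(1-1/\alpha)}$ is independent of $S_{ex}$. The continuous mapping theorem, applied to the area functional $f \mapsto \int_0^1 f(u)\,du$, then yields
\[
\left(\frac{\theta_+}{n}, \frac{\psi_+}{n^{1+1/\alpha}}\right) \stackrel{\D}{\to} \bigl(\delta,\, c_\alpha \delta^{1+1/\alpha} \mathcal{A}_+\bigr),
\]
with $\mathcal{A}_+ := \int_0^1 S_{ex}(u)\,du > 0$ a.s. By \eqref{symm}, the analogous statement holds for $(\theta_-/n, |\psi_-|/n^{1+1/\alpha})$.

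For the joint behavior of both phases, use the strong Markov property at time $\theta_+ + 1$: given the positive phase together with the overshoot $S_{\theta_+ + 1}$, the negative phase is a fresh stopped random walk. A renewal-theoretic argument, using the right-exponential or right-continuous structure to control the overshoot distribution, shows that after rescaling by $n^{1/\alpha}$ the overshoot has a nondegenerate limit and becomes asymptotically independent of the shape of the positive phase. This yields the joint convergence
\[
\left(\frac{\theta_+}{n}, \frac{\psi_+}{n^{1+1/\alpha}}, \frac{\theta_-}{n}, \frac{|\psi_-|}{n^{1+1/\alpha}}\right) \stackrel{\D}{\to} \bigl(\delta_+,\, c_\alpha \delta_+^{1+1/\alpha} \mathcal{A}_+,\, \delta_-,\, c_\alpha \delta_-^{1+1/\alpha} \mathcal{A}_-\bigr)
\]
conditional on $\theta_1 > n$, with the two pairs independent in the limit. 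Continuous mapping applied to the sums $(\theta_+/n + \theta_-/n,\ \psi_+/n^{1+1/\alpha} + \psi_-/n^{1+1/\alpha})$, combined with the atomlessness provided by \eqref{no atom}, gives the existence of the limit $F^{sign(\epsilon)}(s,t)$ as an explicit integral against the product law of the two excursions; it is finite, and positive because $\mathcal{A}_\pm > 0$ a.s.

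The main obstacle is establishing the joint convergence displayed above: the prelimit coupling of the two phases through the overshoot $S_{\theta_+ + 1}$ is delicate, and careful use of either the memoryless (right-exponential) or the skip-free (right-continuous) hypothesis is needed to show that after rescaling this coupling becomes genuine asymptotic independence.
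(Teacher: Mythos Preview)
Your proposal has a genuine gap: the claimed asymptotic independence of the two phases is false, and with it the displayed joint limit collapses. The coupling between the positive and negative phases does \emph{not} wash out after rescaling. At the end of the positive excursion the walk sits at $S_T<0$, and for $1<\alpha<2$ this undershoot is of order $T^{1/\alpha}\asymp n^{1/\alpha}$; indeed, by \eqref{Doney} and \eqref{no atom}, $S_T/T^{1/\alpha}\Rightarrow S_{ex}(1)<0$ jointly with the shape (and hence the area) of $S_{ex}$. The negative phase is then, by the Markov property, a fresh walk run until it passes the level $|S_T|$, i.e.\ its length is $T'(|S_T|)$ and its area is $\sum_{i<T'(|S_T|)}(S_i-|S_T|)$. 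In the limit these become $T''(u)$ and $\int_0^{T''(u)}(S(r)-u)\,dr$ for the spectrally negative stable process, with $u$ equal to the (rescaled) undershoot of the positive excursion. So $(\theta_-,\psi_-)$ is a deterministic functional of the overshoot and an independent stable process, and the overshoot is \emph{not} independent of $(\theta_+,\psi_+)$. Your ``two independent excursions'' picture cannot be correct; in particular, if $(\theta_+,\theta_-)$ were asymptotically i.i.d.\ heavy-tailed with index $1-1/\alpha$, then conditioning on $\theta_1=\theta_++\theta_->n$ would force one of them to be $o(n)$, contradicting the fact (implicit in \eqref{shape} and its proof) that for $1<\alpha<2$ both phases are of order $n$.

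The paper proceeds exactly through this dependence. First it shows \eqref{shape}, so that $\{\theta_1>n\}$ can be replaced, up to a vanishing error, by $\{T\ge \varepsilon n\}$. Then it conditions on the triple $(T,A_T,S_T)$: Doney's theorem \eqref{Doney} gives the limit law $P(dx,dy,dz)$ of $(T/(\varepsilon n),\,A_T/T^{1+1/\alpha},\,-S_T/T^{1/\alpha})$, while the remaining negative phase contributes a factor $f^+(x^{1/\alpha}z,1-x,t-x^{1+1/\alpha}y)$ coming from the first-passage problem for the stable process above level $x^{1/\alpha}z$. Letting $\varepsilon\to0$ yields the explicit integral \eqref{stable area 2}. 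Note also that your outline does not separate the case $\alpha=2$, where the picture is entirely different: there \eqref{shape} fails and a long cycle is essentially all positive or all negative (cf.\ the discussion after Lemma~\ref{IID} and the case split in Lemma~\ref{LAST 1}); the paper simply quotes \cite{IIRW} for that case.
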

Thus \eqref{2D tails} holds true and we conclude that the bivariate stable limit theorem \eqref{2D conv} holds for the walk $(\Theta_n, \Psi_n)$.
\begin{proof}
For $\alpha = 2$, this is the result of Proposition~1 from \cite{IIRW}, which actually proves it only for a right-exponential $S_1$ but the right-continuous case should be considered {\it exactly} in the same way. Assume now that $1 < \alpha <2$.

By Lemma~\ref{IID}, we should consider only $\epsilon = 1$. $S_1 \in \mathcal{R}_\alpha$ implies $T \in \mathcal{DN}(1-1/\alpha)$ and we denote $$c_3:=\lim \limits_{n \to \infty} n^{1-1/\alpha} \Pt\{ T > n\} = \lim \limits_{n \to \infty} n^{1-1/\alpha} \frac{\P\{ T > n\}}{\P\{S_1>0\}}, \quad c_4:=\lim \limits_{n \to \infty} n^\alpha \P\{ S_1 < -n\}.$$ We first claim that
\be \label{shape}
\lim_{\varepsilon \to 0+} \varlimsup_{n \to \infty}  n^{1-\frac{1}{\alpha}} \P \bigl\{ \theta_1 > n ,  \theta_+ < \varepsilon n \bigr\} = 0.
\ee
Heuristically this means that a long cycle starts with a positive excursion of comparable length. Use \eqref{symm} and $\theta_+ = T-1$ under $\Pt$ to write
\beaa
\lim_{\varepsilon \to 0+} \varlimsup_{n \to \infty} n^{1-\frac{1}{\alpha}} \P \bigl\{ \theta_1 > n ,  \theta_+ < \varepsilon n \bigr\} &\le& \lim_{\varepsilon \to 0+} \varlimsup_{n \to \infty} n^{1-\frac{1}{\alpha}}  \P \bigl\{ \theta_- > (1-\varepsilon) n ,  \theta_+ < \varepsilon n \bigr\}\\
&=& \lim_{\varepsilon \to 0+} \varlimsup_{n \to \infty} n^{1-\frac{1}{\alpha}} \Pt \bigl\{ T > (1-\varepsilon) n , S_T \le -(\varepsilon^{1/2} n)^{1/\alpha}, \theta_- < \varepsilon n \bigr\}\\
&\le& c_3 \lim_{\varepsilon \to 0+} \varlimsup_{n \to \infty} \P \bigl\{ T'((\varepsilon^{1/2} n)^{1/\alpha}) < \varepsilon n \bigr\} \\
&=& c_3 \lim_{\varepsilon \to 0+} \P \bigl\{ T''(\varepsilon^{1/(2\alpha)}) < \varepsilon \bigr\} =0,
\eeaa
where $T'(u) := \min \{k \ge 0 : S_k > u\}$ and $T''(u) := \inf \{r \ge 0 : S(r) > u\}$, and we used \eqref{no atom} in the second line and the self-similarity $T''(u) \stackrel{\D}{=} u^\alpha T''(1)$ in the fourth line.

Consider $s \neq 0$, then by the obvious change of variables it suffices to take $s=1$. For an $\varepsilon \in (0,1/2)$, write
\bea
&& \lim_{n \to \infty} n^{1 - \frac{1}{\alpha}} \P \{ \theta_1 > n, \psi_1 > t n^{1 +\frac{1}{\alpha}}, \theta_+ \ge \varepsilon n\} \notag\\
&=& \lim_{n \to \infty} n^{1 - \frac{1}{\alpha}} \Pt \{ T > \varepsilon n\} \lim_{n \to \infty} \P \bigl\{\theta_1 > n, \psi_1 > t n^{1 + \frac{1}{\alpha}} \bigl| \bigr. \, T \ge \varepsilon n\bigr\}. \label{new1}
\eea
In the second factor, condition on the parameters of the first positive excursion and write
\bea \notag
&& \lim_{n \to \infty} \P \bigl\{\theta_1 > n, \psi_1 > t n^{1 + \frac{1}{\alpha}} \bigl| \bigr. \, T \ge \varepsilon n\bigr\} \\
&=& \label{stable area 1} \lim_{n \to \infty} \int_1^\infty \int_0^\infty \int_0^\infty f_n^+ \bigl((\varepsilon x )^{1/\alpha} z, 1 - \varepsilon x, t - (\varepsilon x )^{1+ 1/\alpha} y \bigr) P_n^{(\varepsilon)} (dx,dy,dz),
\eea
where
$$P_n^{(\varepsilon)} (dx,dy,dz):= \P \Bigl\{ \frac{T}{\varepsilon n } \in dx, \frac{A_T}{T^{1+\frac{1}{\alpha}}} \in dy, \frac{S_T}{T^{\frac{1}{\alpha}}} \in -dz \Bigl| \Bigr. \, T \ge \varepsilon n \Bigr\}$$
and
$$f_n^+(u, v, w) := \P \Bigl\{T'(u n^{1/\alpha} )-1 > n v, \sum_{i=1}^{T'(u n^{1/\alpha} )-1} (S_i - u n^{1/\alpha}) > w n^{1 + 1/\alpha}\Bigr\}.$$

It clear that for any $u > 0$ and $v,w$ it holds that $$\lim_{n \to \infty} f_n^+(u, v, w) = f^+(u, v, w):=\P \Bigl\{T''(u) \ge v , \, \int_0^{T''(u)} (S(r) - u) dr \ge w \Bigr\}.$$ We claim that this convergence is uniform in $(u, v, w) \in [\delta, \infty) \times \R^2$ for any $\delta >0$. As $f_n^+(u,v,w)=f_{n u^\alpha}^+(1,u^{-\alpha} v, u^{-\alpha-1} w)$ and $f^+(u,v,w)=f^+(1,u^{-\alpha} v, u^{-\alpha-1} w)$ by self-similarity of $S$, we should check that the convergence is uniform in $(v, w) \in \R^2$ for $u=1$. This statement just a little improvement of the standard fact that the distribution functions of weakly convergent r.v.'s converge uniformly if the limit distribution is continuous. We prove the bivariate uniformness by showing that the r.v.'s $T''(1)$ and $\int_0^{T''(1)} (S(r) - u) dr$ are continuous. The first clearly is, say, since $T''(u)$ is a stable subordinator with index $1/\alpha$ as $S$ does not have positive jumps. For the second, use that $\int_0^x (S(r) - u) dr$ and $S(x)$ are jointly stable and, consequently, have a joint density for any $x >0$.

Thus the integrands in \eqref{stable area 1} converge uniformly in $(x,y,z) \in [1, \infty) \times [0, \infty) \times [\delta, \infty)$. Further, \eqref{Doney} ensures $P_n^{(\varepsilon)} \stackrel{\D}{\to} P$ in $[1, \infty) \times \R^2_+$ for any fixed $\varepsilon$, where
$$P(dx,dy,dz):=d (-x^{\frac{1}{\alpha}-1} ) \P \Bigl\{ \int_0^1 S_{ex}(s) ds \in dy, S_{ex}(1) \in -dz \Bigr\}.$$ As we seen in \eqref{no atom}, $S_{ex}(1)$ does not have an atom at zero, so \eqref{shape}, \eqref{new1} and \eqref{stable area 1} imply
\bea
F^+(1,t) &=& c_3 \lim_{\varepsilon \to 0+}  \varepsilon^{\frac{1}{\alpha}-1} \int_1^\infty \int_0^\infty \int_0^\infty f^+ \bigl((\varepsilon x )^{1/\alpha} z, 1 - \varepsilon x, t - (\varepsilon x )^{1+ 1/\alpha} y \bigr) P(dx, dy, dz) \notag \\
&=& c_3  \iiint \limits_{\R_+^3} f^+ \bigl(x^{1/\alpha} z, 1 - x, t - x^{1+ 1/\alpha} y \bigr) P(dx, dy, dz) \label{stable area 2}.
\eea
This expression is finite as by \eqref{symm},
$$F^+(1,t) \le  \varlimsup_{n \to \infty} n^{1 - \frac{1}{\alpha}} \P \bigl\{ \theta_+ + \theta_- > n\bigr\} \le 2 \lim_{n \to \infty} n^{1 - \frac{1}{\alpha}} \P \bigl\{ \theta_+ > n/2 \bigr\} = 2 c_3.$$

It remains to consider $s=0, t>0$ and it suffices to take $t=1$. With $\psi_1 \le A_T$ in mind, we have
\beaa
\lim_{\varepsilon \to 0+} \varlimsup_{n \to \infty} n^{1 - \frac{1}{\alpha}} \P \{ \theta_+ < \varepsilon n, \psi_1 > n^{1 +\frac{1}{\alpha}}\} &\le& \lim_{\varepsilon \to 0+} \varlimsup_{n \to \infty} n^{1 - \frac{1}{\alpha}} \Pt \{ T < \varepsilon n, A_{T-1} >  n^{1 +\frac{1}{\alpha}}\}\\
&\le& \lim_{\varepsilon \to 0+} \varlimsup_{n \to \infty} n^{1 - \frac{1}{\alpha}} \Pt \{ \max_{1 \le k < \varepsilon n} S_k > \varepsilon^{-1} n^{\frac{1}{\alpha}}\} = 0,
\eeaa
and then argue as above in \eqref{stable area 1} and \eqref{stable area 2} to get
$$F^+(0,1) = c_3  \iiint \limits_{\R_+^3} f^+ \bigl(x^{1/\alpha} z, 0, 1 - x^{1+ 1/\alpha} y \bigr) P(dx, dy, dz).$$ The last  expression is finite as $A_T \in \mathcal{DN}(\frac{\alpha-1}{\alpha+1})$ by Corollary 2 and Example 6 by Doney~\cite{Doney}.

\end{proof}

\subsection{Areas of incomplete cycles}

\begin{lem} \label{LAST 1}
Let $S_n$ be a random walk such that $S_1 \in \mathcal{R}_\alpha$ for some $1 < \alpha \le 2$. Then for any $s,t >0$ there exists a finite positive
$$F(s,t):=\lim_{n \to \infty} n^{1-\frac{1}{\alpha}} \Pt \Bigl \{ \theta_1 \ge s n, A_{sn} > -tn^{1+\frac{1}{\alpha}} \Bigr \},$$ and this convergence is uniform in $(s,t) \in [\varepsilon, \infty) \times [0, \infty)$ for any $\varepsilon >0$. Moreover, $F(s,t)$ is continuous on $\R_+^2$.
\end{lem}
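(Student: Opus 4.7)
My plan is to adapt the excursion-based strategy from the proof of Lemma~\ref{CYCLE} to track the partial area $A_{sn}$ instead of the full cycle area $\psi_1$. The key is to apply Doney's conditional functional limit theorem \eqref{Doney}, now augmented jointly with the area by the continuous mapping $f \mapsto (f, \int_0^\cdot f)$. Specifically, decompose the event according to whether the first positive excursion has ended by time $sn$:
\beaa
\Pt\{\theta_1 \ge sn, A_{sn} > -tn^{1+1/\alpha}\} = \Pt\{\theta_+ \ge sn\} + \Pt\{\theta_+ < sn \le \theta_1, A_{sn} > -tn^{1+1/\alpha}\},
\eeaa
using that $A_{sn} \ge 0$ when the walk is still in its positive excursion at time $sn$. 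Since $\theta_+ = T-1$ under $\Pt$ and $T \in \mathcal{DN}(1-1/\alpha)$, the first term satisfies $n^{1-1/\alpha}\Pt\{\theta_+ \ge sn\} \to c_3 s^{1/\alpha - 1}$, uniformly on $s \in [\varepsilon, \infty)$.

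For the second term, first use the analog of \eqref{shape} to discard the contribution from $\theta_+ < \delta s n$: a long cycle with a very short positive part forces a large undershoot, and the associated probability vanishes after normalization as $\delta \to 0+$. On the main set $\delta sn \le \theta_+ < sn$, condition on the first positive excursion and apply \eqref{Doney} augmented with the partial area: the rescaled triple $(S_{[T\cdot]}/T^{1/\alpha}, A_{[T\cdot]}/T^{1+1/\alpha}, T/n)$ converges to $(S_{ex}(\cdot), \int_0^\cdot S_{ex}(r)dr, \delta_{S_+})$. By the strong Markov property at $T$, the remaining part of the cycle from time $T$ to $\theta_1$ is an independent walk starting at $S_T < 0$ and stopped at its first upcrossing; by the symmetry \eqref{symm}, it has the same law (up to sign and time-reversal) as an analogous Doney-type piece, and the same joint FCLT applies to its path and partial area. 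Integrating against the joint limit law yields an explicit integral expression for $F(s,t)$; finiteness is ensured by $F(s,t) \le F^+(s,0) + F^-(s,0) < \infty$ from Lemma~\ref{CYCLE}, and positivity comes from the fact that the limit density of the partial area at time $s$ is everywhere positive on $\R$.

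The main obstacle is to establish continuity of $F(s,t)$ on $\R_+^2$ and to upgrade pointwise convergence to uniform convergence on $[\varepsilon, \infty) \times [0, \infty)$. Continuity in $t$ follows from absolute continuity of the limit distribution of $A_{sn}/n^{1+1/\alpha}$ at each fixed $s$; this absolute continuity is inherited from the Vatutin-Wachtel density estimates invoked in the proof of Lemma~\ref{CYCLE} together with the smoothing produced by integration of the stable excursion path. Continuity in $s$ comes from the continuous dependence of $\int_0^s S_{ex}(r)dr$ on the upper limit, and from the continuity in $s$ of the limit law of the excursion stopping time. Given continuity of $F$ and monotonicity of the left-hand side in $t$, pointwise convergence upgrades to locally uniform convergence in $t$ via Polya's theorem; equicontinuity in $s$ on $[\varepsilon, \infty)$ is obtained from the uniform bound $n^{1-1/\alpha}\Pt\{\theta_1 \ge sn\} \lesssim s^{1/\alpha-1}$ and the tightness of the rescaled excursion quantities guaranteed by \eqref{Doney}, which together yield joint uniform convergence on $[\varepsilon, \infty) \times [0, \infty)$.
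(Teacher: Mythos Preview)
Your approach works essentially as in the paper for $1<\alpha<2$, but it contains a genuine gap at $\alpha=2$. You propose to ``use the analog of \eqref{shape} to discard the contribution from $\theta_+<\delta sn$'' and then work on the main set $\delta sn\le\theta_+<sn$. However, \eqref{shape} is \emph{false} for $\alpha=2$: the paper states this explicitly (``The main difference with $1<\alpha<2$ is that \eqref{shape} is no longer true''). Indeed, for $\alpha=2$ a long cycle is typically either almost entirely positive or almost entirely negative (cf.\ Eq.~(17) in \cite{IIRW}), so the event $\{\theta_+<\delta n,\ \theta_1\ge n\}$ is essentially $\{\theta_-\ge(1-\delta)n\}$ and has probability of order $n^{-1/2}$ that does not vanish as $\delta\to 0+$. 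In fact this is precisely the \emph{main} contribution to the second term in your decomposition when $\alpha=2$: the paper shows in \eqref{last2}--\eqref{last3} that the answer comes from the negative excursion, identified via the symmetry \eqref{symm} with the area of a Brownian meander. Your ``main set'' $\delta sn\le\theta_+<sn$ is then asymptotically negligible for $\alpha=2$, so conditioning on it and applying Doney's FCLT there captures nothing.

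To repair the argument you must treat $\alpha=2$ separately: instead of discarding $\{\theta_+<\delta n\}$, keep it and analyse the negative excursion directly via \eqref{symm} and \eqref{Shimura}, as the paper does, showing that the positive part and the overlap set $\{\theta_+\ge\delta n,\ \theta_-\ge\delta n\}$ are negligible. The remaining parts of your sketch (continuity via absolute continuity of the limit, uniformity via monotonicity in $t$ and a Polya-type argument) are in line with the paper's reasoning.
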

\begin{rem*}
For any $s,t >0$ it holds that $F(s,t)=c_3 s^{\frac{1}{\alpha}-1} \bigl(1 + G_\alpha(t s^{-1-\frac{1}{\alpha}}) \bigr)$ for some increasing function $G_\alpha: (0, \infty) \to [0,1]$, with $G_2(x)$ equal to the distribution function of the area of a Brownian meander $\sigma W_+$.
\end{rem*}

\begin{proof}
The remark is obvious by the change of variables and
$$F(s,t) = \lim_{n \to \infty} n^{1-\frac{1}{\alpha}} \Pt \bigl \{ \theta_+ \ge sn \bigr \} + \lim_{n \to \infty} n^{1-\frac{1}{\alpha}} \Pt \Bigl \{ \theta_+ < sn, \theta_1 \ge sn, A_n > -tn^{1+\frac{1}{\alpha}} \Bigr \}.$$ Let us put $s=1$.

{\it Case $1 < \alpha <2$.} We literally repeat the proof of Lemma~\ref{CYCLE} to get
$$F(1,t) = c_3  \iiint \limits_{\R_+^3} f \bigl(x^{1/\alpha} z, 1 - x,  -t- x^{1+ 1/\alpha} y \bigr) P(dx, dy, dz)$$ with
$$f(u, v, w):=\P \Bigl\{T''(u) \ge v^+ , \, \int_0^{v^+} (S(r) - u) dr \ge w \Bigr\}.$$ The continuity of $F(1,t)$ follows from the same of $f(u, v,w)$ and the theorem of dominated convergence. As $F(1,t)$ is bounded, continuous and monotone and the converging functions are uniformly bounded and monotone, the convergence is uniform in $t$.


{\it Case $\alpha =2$.} The main difference with $1 < \alpha <2$ is that \eqref{shape} is no longer true.

We should prove that $c_3 G_2(t) = \lim \limits_{n \to \infty} n^{1/2} \Pt \bigl \{ \theta_+ < n, \theta_1 \ge n, A_n > -tn^{3/2} \bigr \},$ and the goal is to show that the contribution comes only from the negative excursion:
\be \label{last2}
\lim_{n \to \infty} n^{1/2} \Pt \Bigl \{ \theta_+ < n, \theta_1 \ge n, A_n  > -tn^{3/2} \Bigr \} = \lim_{n \to \infty} n^{1/2}  \Pt \Bigl \{ \theta_- \ge n, A_{n + \theta_+} - A_{\theta_+} > -tn^{3/2} \Bigr \}.
\ee
Heuristically this holds as the positive part of such a long cycle with $\theta_+ < n$ is negligible, according to our previous comment on the shape of long cycles for $\alpha =2$.

Let us first find the value of the right-hand side. By \eqref{symm}, we get
\be \label{symm2}
\lim_{n \to \infty} n^{1/2}  \Pt \Bigl \{ \theta_- \ge n, A_{n + \theta_+} - A_{\theta_+} > -tn^{3/2} \Bigr \} = \lim_{n \to \infty} n^{1/2}  \Pt \Bigl \{ \theta_+ \ge n, A_{\theta_+} - A_{\theta_+ - n} < tn^{3/2} \Bigr \}
\ee
as $n \to \infty$, reducing the problem to consideration of the first positive excursion of $S_n$. Then we apply \eqref{Shimura} to get
\bea
\lim_{n \to \infty} n^{1/2} \Pt \Bigl \{ \theta_+ \ge n, A_{\theta_+} - A_{\theta_+ - n} < tn^{3/2} \Bigr \} &=& c_3 \P \biggl \{ \sigma \delta_{W_+}^{3/2} \int_{1-\delta_{W_+}^{-1}}^1 W_{ex}(s) ds < t \biggr \} \notag \\
&=& c_3 \P \biggl \{ \int_0^1 \sigma \delta_{W_+}^{1/2} W_{ex}(s \delta_{W_+}^{-1}) ds < t\biggr \} \notag\\
&=& c_3 G_2(t) \label{last3},
\eea
where we used $\delta_{W_+}^{1/2} W_{ex}(\cdot \delta_{W_+}^{-1}) \stackrel{\D}{=} W_+(\cdot)$, which follows from \eqref{boltha} and \eqref{Shimura}. Due to Janson~\cite{Janson}, the area of Brownian meander has density so $G_2$ is continuous.

It remains to prove \eqref{last2}. Fix a $\delta \in (0,1/2)$ and write
\be \label{dUe}
\bigl \{ \theta_+ < n, \theta_1 \ge n \bigr \} = D \cup E_1,
\ee
where $E_1:=  \bigl \{ (1-\delta) n \le \theta_+ < n \bigr \} \cup \bigl \{ (1-\delta) n \le \theta_- < n \bigr \} \cup \bigl \{ \theta_+ \ge \delta n, \theta_- \ge \delta n \bigr \}$ and $D:= \bigl \{ \theta_+ < \delta n, \theta_- \ge n\bigr \}$. Note that $$\lim_{\delta \to 0+} \lim_{n \to \infty} n^{1/2} \bigl( \Pt \bigl \{ \theta_- \ge n \bigr \} - \Pt (D) \bigr) =0, \quad \lim_{\delta \to 0+} \lim_{n \to \infty} n^{1/2} \Pt (E_1) = 0$$ by continuity in $s$ of $\lim \limits_{n \to \infty} n^{1/2} \Pt \bigl \{ \theta_{+/-} \ge s n \bigr \}$ and the relation $$\lim_{n \to \infty} n^{1/2} \Pt \bigl \{ \theta_+ \ge n, \theta_- \ge n \bigr \} =0,$$ which is Eq. (17) from \cite{IIRW}. Its heuristical meaning is that a long cycle is essentially either positive or negative. Thus $D$ gives the main contribution in \eqref{dUe}.

Further, from \eqref{dUe} we have
\be \label{dUe2}
\bigl \{ \theta_+ < n, \theta_1 \ge n, A_n > -t n^{3/2} \bigr \} \subset D \cap \bigl \{ A_n - A_{\theta_+} > - t n^{3/2} \bigr \} \cup E_1 \cup E_2 \cup E_3,
\ee
where $$E_2:= \bigl \{ \theta_+ < \delta n, \theta_- \ge n, - (t+\delta) n^{3/2} \le A_n - A_{\theta_+} \le -t n^{3/2} \bigr \}, \quad E_3:= \bigl \{ \theta_+ < \delta n, A_{\theta_+} > \delta n^{3/2} \bigr \}.$$ Here $\lim \limits_{\delta \to 0+} \lim \limits_{n \to \infty} n^{1/2} \Pt (E_3) = \lim \limits_{\delta \to 0+} F^+(0,\delta) - F^+(\delta, \delta)= 0$ by the continuity of $F^+(s,t)$ at $(0,0)$ given in Proposition 1 in Vysotsky~\cite{IIRW}. For the same relation for $E_2$, write $$E_2 \subset \Bigl \{ \theta_- \ge n,  A_{n + \theta_+} - A_{(1-\delta)n + \theta_+} - (t+\delta) n^{3/2} \le A_{n + \theta_+} - A_{\theta_+} \le -t n^{3/2} \Bigr \},$$ use the symmetry as in \eqref{symm2} and then argue as in \eqref{last3} to get
$$\lim_{\delta \to 0+} \lim_{n \to \infty} n^{1/2} \Pt (E_2) \le c_3 \P \biggl \{ \sigma^{-1} t  <  \int_0^1 W_+(s) ds < \int_{1-\delta}^1 W_+(s) ds + \sigma^{-1} (t + \delta) \biggr \} =0.$$

Finally, combine
\beaa
&& \bigl \{ \theta_+ < \delta n, \theta_- \ge n, A_{n + \theta_+} - A_{\theta_+} > - t n^{3/2} \bigr \} \\
&\subset& D \cap \bigl \{ A_n - A_{\theta_+} > - t n^{3/2} \bigr \} \\
&\subset& \bigl \{ \theta_- \ge (1-\delta) n, A_{(1-\delta) n + \theta_+} - A_{\theta_+} > - t n^{3/2} \bigr \} \\
\eeaa
with \eqref{last3} and the continuity of $G_2$ to get $$\lim_{\delta \to 0+} \lim_{n \to \infty} n^{1/2} \Bigl( \Pt \bigl \{\theta_- \ge n, A_{n + \theta_+} - A_{\theta_+} > - t n^{3/2} \bigr \} - \Pt \bigl(D \cap \bigl \{ A_n - A_{\theta_+} > - t n^{3/2} \bigr \} \bigr) \Bigr) =0.$$ Together with \eqref{dUe2} and the estimates above this concludes \eqref{last2}.
\end{proof}

\section{Sharp asymptotics of $p_N$} \label{SEC SHARP}
In this section we prove the core result of the paper. Let us first explain the main ideas of the proof. When considering the weak asymptotics of $p_N$ in Proposition~\ref{WEAK} we used the symmetry and successfully disregarded the last incomplete cycle obtaining \eqref{disregard}. For the sharp asymptotics, the incomplete area $A_N - A_{\Theta_{\eta(N)}}$ should be considered and compared with the total area $A_{\Theta_{\eta(N)}}=\Psi_{\eta(N)}$ of the preceding cycles. For such a comparison we will condition on $\eta(N)$, $\Theta_{\eta(N)}$ and $\Psi_{\eta(N)}$ in Step 1 in order to use that the length $\theta_{\eta(N)+1}$  of the last incomplete cycle exceeds $N-\Theta_{\eta(N)}$ and its incomplete area exceeds $-\Psi_{\eta(N)}$. Note that the areas are of the same order since by the classical results of renewal theory (see Feller~\cite[Ch. XIV.3]{Feller}), $N-\Theta_{\eta(N)}$ is of the order $N$. In Step 2 we essentially apply the conditional limit theorem \eqref{bolt2D} to describe the joint distribution of $(\Theta_{\eta(N)}, \Psi_{\eta(N)})$ conditioned that its second component stayed positive. Step 3 is somewhat technical.

{\it Step 1.} Condition on $\eta(N)$ in \eqref{reduction} to obtain
\beaa
\frac{p_N}{\P\{S_1>0\}} &=& \sum_{k=0}^\infty \Pt \Bigl \{ \eta(N) = k, \min \limits_{1 \le i \le k} \Psi_i > 0, A_N >0 \Bigr \} \\
&=& \sum_{k= \varepsilon N^{1-1/\alpha}}^{\varepsilon^{-1} N^{1-1/\alpha}} \Pt \Bigl \{  \Theta_k \le N, \theta_{k+1} > N-\Theta_k, \min \limits_{1 \le i \le k} \Psi_i  > 0, A_N - A_{\Theta_k} > -\Psi_k \Bigr \} \\
&& + R_1(\varepsilon, N) + R_2(\varepsilon, N),
\eeaa
where an $\varepsilon \in (0, 1/2)$ is fixed while $R_1$ and $R_2$ by definition corresponds to $\eta(N) < \varepsilon N^{1-1/\alpha}$ and $\eta(N) > \varepsilon^{-1} N^{1-1/\alpha}$, respectively. Let $S_n'$ be an independent copy of $S_n$. As $\theta_{\eta(N)+1}$ has the order $N$, we write $$\frac{p_N}{\P\{S_1>0\}} = \sum_{k= \varepsilon N^{1-1/\alpha}}^{\varepsilon^{-1} N^{1-1/\alpha}} \Pt \Bigl \{ \Theta_k \le (1-\varepsilon^2) N,  \theta_{k+1} > N-\Theta_k, \min \limits_{1 \le i \le k} \Psi_i > 0, A_{N-\Theta_k}' > -\Psi_k \Bigr \} + R(\varepsilon, N)$$
with $R(\varepsilon, N) := R_1 + R_2 + R_3$ and $R_3=R_3(\varepsilon, N)$ by definition corresponding to $(1-\varepsilon^2) N < \Theta_k < N $.

For each $k$, condition on $(\Theta_k, \Psi_k)$ and rewrite the last formula as
\beaa
&& \frac{p_N}{\P\{ S_1 >0 \}} \\
&=& \sum_{k= \varepsilon N^{1-1/\alpha}}^{\varepsilon^{-1} N^{1-1/\alpha}} \Pt \Bigl \{ \min \limits_{1 \le i \le k} \Psi_i  > 0 \Bigr \}
\int_0^{(1-\varepsilon^2) N} \int_0^\infty \Pt \Bigl \{ \theta_{k+1} > N-x, A_{N-x}'  > -y \Bigr \} \\
&& \times \Pt \Bigl \{ \Theta_k \in dx, \Psi_k \in dy \Bigl | \Bigr. \min \limits_{1 \le i \le k} \Psi_i > 0 \Bigr \}  + R(\varepsilon, N), \\
&=& \frac{1}{N^{1-1/\alpha}} \sum_{k= \varepsilon N^{1-1/\alpha}}^{\varepsilon^{-1} N^{1-1/\alpha}} \Pt \Bigl \{ \min \limits_{1 \le i \le k} \Psi_i  > 0 \Bigr \} \int_0^{\frac{(1-\varepsilon^2) N}{k^{\alpha/(\alpha-1)}}} \int_0^\infty F_N \Bigl(1- x \Bigl ( \frac{k}{N^{1-1/\alpha}} \Bigr)^{\frac{\alpha}{\alpha-1}}, y \Bigl ( \frac{k}{N^{1-1/\alpha}} \Bigr)^{\frac{\alpha+1}{\alpha-1}} \Bigr) \\
&& \times \Pt \Bigl \{ \frac{\Theta_k}{k^{\alpha/(\alpha-1)}} \in dx, \frac{\Psi_k}{k^{(\alpha+1)/(\alpha-1)}} \in dy \Bigl | \Bigr. \min \limits_{1 \le i \le k} \Psi_i  > 0 \Bigr \} + R(\varepsilon, N)
\eeaa
with $$F_n(u,v):= n^{1-1/\alpha} \Pt \Bigl \{ \theta_1 > u n, A_{un} > -v n^{1+1/\alpha} \Bigr \} $$ corresponding to the last incomplete cycle. Let $Q_k(dx, dy)$ denote the conditional probability measure in the last integral. Thinking of the summation as of the integration over the discretization of the Lebesgue measure $\lambda$, we introduce $$U_n(dz):= n^{-1} \delta_0(\{zn \}), \quad P_n(dz, dx,dy):= Q_{z n^{1-1/\alpha}}(dx, dy) U_{n^{1 -1/\alpha}}(dz)$$ and get
\beaa
\frac{p_N}{\P\{ S_1 >0 \}} &=& \int_\varepsilon^{\frac{1}{\varepsilon}} \int_0^{\frac{1-\varepsilon^2}{z^{\alpha/(\alpha-1)}}} \int_0^\infty \Pt \Bigl \{ \min \limits_{1 \le i \le z N^{1 -1/\alpha}} \Psi_i  > 0 \Bigr \} F_N \bigl(1- x z^{\frac{\alpha}{\alpha-1}}, y z^{\frac{\alpha+1}{\alpha-1}} \bigr) P_N(dz, dx, dy) \\
&&+ R(\varepsilon, N).
\eeaa

{\it Step 2.} By Lemma~\ref{LAST 1},
\bea \label{pN3}
\frac{p_N N^{\frac12 - \frac{1}{2\alpha}}}{c_1 \P\{ S_1 >0 \}} &=& \int_\varepsilon^{\frac{1}{\varepsilon}} \int_0^{\frac{1-\varepsilon^2}{z^{\alpha/(\alpha-1)}}} \int_0^\infty  z^{-1/2} F \bigl(1- x z^{\frac{\alpha}{\alpha-1}}, y z^{\frac{\alpha+1}{\alpha-1}} \bigr) P_N(dz, dx, dy) \\
&& + o_\varepsilon(1) + R(\varepsilon, N) N^{\frac12 - \frac{1}{2\alpha}} \notag
\eea
as $N \to \infty$. Further, Lemma~\ref{CYCLE} ensures \eqref{2D tails} that implies \eqref{2D conv}, that is,
$$\Bigl ( \frac{\Psi_n}{n^{(\alpha+1)/(\alpha-1)}}, \frac{\Theta_n}{n^{\alpha/(\alpha-1)}} \Bigr) \stackrel{\D}{\longrightarrow}  \mathbf{S}(1) \mbox{ under } \Pt,$$ where $\mathbf{S}(1)$ is a bivariate stable r.v. (in the sense of Resnick and Greenwood~\cite{Resnick}) with indices $\frac{\alpha-1}{\alpha+1}, \frac{\alpha-1}{\alpha}$. By \eqref{bolt2D}, we have $Q_n \stackrel{\D}{\to} \mbox{Law}(\mathbf{S_+}(1))$ implying $P_N \stackrel{\D}{\to} \lambda|_{[\varepsilon, \varepsilon^{-1}]} \otimes \mbox{Law}(\mathbf{S_+}(1))$ as we are concerned with $z \ge \varepsilon$. The integrand in \eqref{pN3} is continuous a.s. with respect to the limit measure so
$$C_\alpha:=\lim_{N \to \infty} \frac{p_N N^{\frac12 - \frac{1}{2\alpha}}}{c_1 \P\{ S_1 >0 \}} = \int_0^\infty \int_0^{z^{-\frac{\alpha}{\alpha-1}}} \int_0^\infty z^{-1/2} F \bigl(1- x z^{\frac{\alpha}{\alpha-1}}, y z^{\frac{\alpha+1}{\alpha-1}} \bigr) dz \P\{\mathbf{S_+}(1) \in (dx, dy)\}$$
if we check that
\be \label{rest}
\lim_{\varepsilon \to 0+} \varlimsup_{N \to \infty} R(\varepsilon, N) N^{\frac12 - \frac{1}{2\alpha}} = 0.
\ee
We simply the formula for the constant using $F \bigl(1- x z^{\frac{\alpha}{\alpha-1}}, y z^{\frac{\alpha+1}{\alpha-1}} \bigr) = z^{-1} F \bigl(z^{-\frac{\alpha}{\alpha-1}} - x, y \bigr)$ and making the change in the integral:
\be \label{const}
C_\alpha = \int_0^\infty \int_0^u \int_0^\infty F(u-x , y) d (u^{\frac{\alpha-1}{2 \alpha}}) \P\{\mathbf{S_+}(1) \in (dx, dy)\}.
\ee
The right-hand side is finite by Proposition~\ref{WEAK}. Of course this can be checked directly using $F(u-x , y) \le c_3 (u-x)^{\frac{1}{\alpha} -1}$ and the observation that $\mathbf{S}_+^{(1)}(1) \stackrel{\D}{=} \mathbf{S}^{(1)}(1)$, which follows from Proposition~\ref{INDEP}.

{\it Step 3.} It remains to check \eqref{rest} to show that the contribution of $R=R_1+R_2 +R_3$ is negligible. Use \eqref{keyk} in the right-exponential case and use the analogous inequality in the right-continuous case to get
\beaa
R_1(\varepsilon, N) + R_2(\varepsilon, N) &\le& \Pt \Bigl \{ \min \limits_{1 \le k \le \eta(N)} \Psi_k  > 0, \frac{\eta(N)}{N^{1-1/\alpha}} \notin [\varepsilon, \varepsilon^{-1}] \Bigr \} \\
&\le& \frac{\bar{c_1} + o(1)}{N^{\frac12 - \frac{1}{2\alpha}}} \Et \, \I_{[\varepsilon, \varepsilon^{-1}]^c } \biggl(\sqrt{\frac{N^{1-1/\alpha}}{\eta(N)}}\biggr) \sqrt{\frac{N^{1-1/\alpha}}{\eta(N)+1}}.
\eeaa
Then the required estimate for $R_1+R_2$ follows from \eqref{renewal} and the uniform integrability of $\sqrt{\frac{N^{1-1/\alpha}}{\eta(N)+1}}$, which we checked when proved Proposition~\ref{WEAK}.

For the last term we proceed as above to obtain
\beaa
R_3(\varepsilon, N) &\le& \Pt \Bigl \{ \min \limits_{1 \le k \le \eta(N)} \Psi_k  > 0, \varepsilon \le \frac{\eta(N)}{N^{1-1/\alpha}} \le \varepsilon^{-1}, 1-\varepsilon^2 \le \frac{\Theta_{\eta(N)}}{N} \le 1 \Bigr \} \\
&\le& \sum_{k= \varepsilon N^{1-1/\alpha}}^{\varepsilon^{-1} N^{1-1/\alpha}} \Pt \Bigl \{ \min \limits_{1 \le i \le k} \Psi_i  \ge 0\Bigr \} \Pt \Bigl \{ (1-\varepsilon^2) N \le \Theta_k \le N, \theta_{k+1} > N-\Theta_k \Bigr \}\\
&\le& \frac{\bar{c_1} + o(1)}{\varepsilon^{\frac12} N^{\frac12 - \frac{1}{2\alpha}}} \Pt \Bigl \{ \frac{N - \Theta_{\eta(N)}}{N} \le \varepsilon^2 \Bigr \},
\eeaa
and by Feller~\cite[Ch. XIV.3]{Feller}, the last probability converges to
$$\int_0^{\varepsilon^2} \frac{\sin(\pi (1-1/\alpha)) dx}{\pi x^{1-1/\alpha}(1-x)^{1/\alpha}} < \varepsilon^{\frac{2}{\alpha}}.$$ Combine the estimates above to conclude \eqref{rest} and the proof of the theorem.

\section*{Acknowledgements}
The author is grateful to Vitali Wachtel for discussion and useful references and to Ofer Zeitouni for attention to this work. The author thanks the anonymous referee for his useful comments and suggestions.

\end{document}